\newtheorem{theorem}{Theorem}[section]
\newtheorem{lemma}[theorem]{Lemma}
\newtheorem{proposition}[theorem]{Proposition}
\newtheorem{corollary}[theorem]{Corollary}
\newtheorem{example}[theorem]{Example}
\theoremstyle{definition}
\newtheorem{definition}[theorem]{Definition}
\newcommand{\G}{\ensuremath{\mathcal{G}}}
\newcommand{\D}{\ensuremath{\mathcal{D}}}
\begin{document}

\title{Existence and Non-existence Results for Strong External Difference Families}

\author{Sophie Huczynska}
\address{School of Mathematics and Statistics, University of St Andrews, St Andrews, Scotland, U.K.}
\email{sh70@st-andrews.ac.uk}

\author{Maura B.~Paterson}
\address{Department of Economics, Mathematics and Statistics, Birkbeck University of London, London, U.K.}
\email{m.paterson@bbk.ac.uk}

\bibliographystyle{abbrv}

\begin{abstract}
We consider strong external difference families (SEDFs);  these are external difference families satisfying additional conditions on the patterns of external differences that occur, and were first defined in the context of classifying optimal strong algebraic manipulation detection codes.  We establish new necessary conditions for the existence of $(n,m,k,\lambda)$-SEDFs; in particular giving a near-complete treatment of the $\lambda=2$ case.  For the case $m=2$, we obtain a structural characterization for partition type SEDFs (of maximum possible $k$ and $\lambda$), showing that these correspond to Paley partial difference sets.  We also prove a version of our main result for generalized SEDFs, establishing non-trivial necessary conditions for their existence.
\end{abstract}

\maketitle
\section{Introduction}
Difference families are much-studied objects in combinatorial literature, and have been used to construct a range of combinatorial objects, including designs and strongly-regular graphs.  They have also been applied in a variety of settings to provide a natural way of expressing various desirable properties of codes and sequences.

Given an additive abelian group $\G$, a set of disjoint subsets of $\G$ forms a \emph{disjoint difference family (DDF)}, where the differences between pairs of subset elements are called \emph{external differences} if the elements lie in different subsets, and \emph{internal differences} if the elements lie in the same subset.  Additional properties may be imposed: for example all subsets in the family may be of the same size, the subsets may partition the group (sometimes, the non-zero elements of the group) or every non-zero element of $\G$ may arise as a (internal/external) difference from the subsets of the family a constant number of times.  A survey of the area is given in \cite{NgPat}. Historically, the external differences have been somewhat less studied than their internal counterparts.  \emph{External difference families (EDFs)} were introduced in \cite{OgaKurStiSai} to construct optimal secret sharing schemes secure against cheating in the setting where the secrets are uniformly distributed.  They are a special case of both {\em difference systems of sets} \cite{ChaDin}, and (weak) {\em algebraic manipulation detection (AMD) codes} \cite{CraDodFehPadWic}.  AMD codes generalise certain known techniques for constructing secret sharing schemes secure against cheating,  and it is established in \cite{CraDodFehPadWic} that such a code is equivalent to a type of DDF. In the setting of secret sharing schemes secure against sets of cheating participants who know the secret (sometimes referred to as the CDV assumption \cite{CDV}) it is necessary to use a {\em strong} variant of these codes.


In this paper, we consider \emph{strong external difference families (SEDFs)}, introduced in \cite{PatSti}.  These are external difference families satisfying an extra condition, and correspond to the strong set-up in the AMD code situation.  The existence of SEDFs is an active area of current investigation (for example, in \cite{MarSti} and \cite{WeiZha}). In this paper, we establish new necessary conditions for the existence of $(n,m,k,\lambda)$-SEDFs; in particular this gives a near-complete treatment of the $\lambda=2$ case.  For $m=2$, we obtain a structural characterization of partition type SEDFs (which have maximal possible $k$ and $\lambda$), showing that these correspond to Paley partial difference sets.  We also prove a version of our main result for generalized SEDFs, establishing non-trivial necessary conditions for their existence.

 \section{Preliminaries}
 
The following definitions are given in \cite{PatSti}:

\begin{definition}
Let $\G$ be an additive abelian group.  For any disjoint sets $A_1,A_2 \subseteq \G$, define the multiset
\[ \mathcal{D}(A_1,A_2)=\{x-y \, | \, x \in A_1, y \in A_2 \}.\]
\end{definition}
 
 \begin{definition}[External difference family]
 Let $\G$ be an additive abelian group of order $n$.
An {\bf $(n,m,k,\lambda)$-external difference family} 
(or {\bf $(n,m,k,\lambda)$-EDF})
is a set of $m$ disjoint $k$-subsets of $\G$, say $A_1, \dots , A_m$, such that
the following multiset equation holds:
\begin{equation*}
\bigcup_{\{i,j:j\neq i\}}\D(A_i,A_j)=\lambda(\G\setminus\{0\}).
\end{equation*}
\end{definition}

\begin{definition}[Strong external difference family]
Let $\G$ be an additive abelian group of order $n$.
An {\bf $(n,m,k,\lambda)$-strong external difference family} 
(or {\bf $(n,m,k,\lambda)$-SEDF})
is a set of $m$ disjoint $k$-subsets of $\G$, say $A_1, \dots , A_m$, such that
the following multiset equation holds for every $i$, $1 \leq i \leq m$:
\begin{equation*}
\bigcup_{\{j:j\neq i\}}\D(A_i,A_j)=\lambda(\G\setminus\{0\}).
\end{equation*}
\end{definition}
An $(n,m,k,\lambda)$-SEDF is, by definition, an $(n,m,k,m\lambda)$-EDF.

Various constraints on the parameters follow from the definition.  The definition requires $m \geq 2$.  It is immediate that $km \leq n$ and, as in the case of general EDFs, double-counting of the differences yields the necessary condition:
\begin{equation}\label{eqn:basic}
 \lambda(n-1)=k^2(m-1).
\end{equation}

Combining these yields the following lemma, proved in \cite{WeiZha}:
\begin{lemma}\label{basiclemma}
For an  $(n,m,k,\lambda)$-SEDF, either
\begin{itemize}
\item $k=1$ and $\lambda=1$; or
\item $k>1$ and $\lambda<k$.
\end{itemize} 
\end{lemma}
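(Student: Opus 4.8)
The plan is to derive both alternatives from just two facts already recorded in the excerpt: the double-counting identity \eqref{eqn:basic}, namely $\lambda(n-1) = k^2(m-1)$, and the packing bound $km \le n$. Before the case split I would note the boundary facts that make all the manipulations legitimate: since $m \ge 2$ and $n \ge km \ge 2$, both $m-1$ and $n-1$ are positive, and since each multiset $\D(A_i,A_j)$ is nonempty (each $A_i$ is a nonempty $k$-subset and there is at least one further part), we have $\lambda \ge 1$.

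First I would dispose of the case $k = 1$. Here \eqref{eqn:basic} reads $\lambda(n-1) = m-1$, while $km \le n$ gives $n \ge m$, i.e. $n-1 \ge m-1$. Combining these with $\lambda \ge 1$ forces $m-1 = \lambda(n-1) \ge n-1 \ge m-1$, so equality holds throughout; in particular $\lambda = 1$ (and, as a by-product, $n = m$). This is the first bullet.

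For the case $k > 1$ I would argue by contradiction, supposing $\lambda \ge k$. Dividing \eqref{eqn:basic} through by $k > 0$ and using $\lambda \ge k$ gives $n - 1 \le \lambda(n-1)/k = k(m-1) = km - k$. But $km \le n$ says $km - 1 \le n - 1$, so $km - 1 \le km - k$, which yields $k \le 1$, contradicting $k > 1$. Hence $\lambda < k$, which is the second bullet.

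I do not anticipate any real obstacle: the whole argument is a short juggling of one identity and one inequality. The only place needing a moment's care is the preliminary remark, namely checking that $\lambda \ge 1$ and that $n-1$ and $m-1$ are strictly positive, since without these the divisions and the chains of inequalities could fail to be meaningful.
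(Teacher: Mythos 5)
Your proposal is correct and rests on exactly the same two ingredients as the paper's proof, namely the identity $\lambda(n-1)=k^2(m-1)$ and the bound $km\leq n$; the paper simply combines them into the single inequality $\frac{\lambda}{k}\leq\frac{n-k}{n-1}$ from which both bullets drop out at once, whereas you unpack the same algebra as a two-case argument. No gap; this is essentially the paper's proof.
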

\begin{proof}
Combining the two necessary conditions above yields $\lambda(n-1)=k(km)-k^2 \leq kn - k^2$, which rearranges to $\frac{\lambda}{k} \leq \frac{(n-k)}{(n-1)}$, from which the result follows.
\end{proof}

Note this implies that the $k$-sets $A_i$ in an $(n,m,k,\lambda)$-SEDF $\{A_1, \ldots, A_m\}$ can be pairs of elements only for $\lambda=1$, triples only for $\lambda=1,2$, and so on.
 
In \cite{PatSti}, a full description of possible parameters was obtained for the case $\lambda=1$:

\begin{theorem}[ \cite{PatSti}]
There exists an $(n,m,k,\lambda)$-SEDF if and only if $m=2$ and $n=k^2+1$, or $k=1$ and $m=n$.
\end{theorem}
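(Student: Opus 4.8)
The plan is to prove both implications; existence is by explicit construction and the converse is the substantive part.

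For the ``if'' direction I would write down the two families directly. When $k=1$ and $m=n$, take any abelian group $\G$ of order $n$ and let $A_g=\{g\}$ for $g\in\G$; then for each fixed $g$ we have $\bigcup_{h\neq g}\D(\{g\},\{h\})=\{g-h:h\neq g\}=\G\setminus\{0\}$, each element occurring once, so $\{A_g\}_{g\in\G}$ is an $(n,n,1,1)$-SEDF. When $m=2$ and $n=k^2+1$, work in $\mathbb{Z}_{k^2+1}$ with $A_1=\{1,2,\dots,k\}$ and $A_2=\{0,-k,-2k,\dots,-(k-1)k\}$; these are disjoint, and $\D(A_1,A_2)=\{\,i+jk:1\le i\le k,\ 0\le j\le k-1\,\}=\{1,\dots,k^2\}=\G\setminus\{0\}$ with every element occurring once. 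Since $\D(A_2,A_1)=-\D(A_1,A_2)$, this is a $(k^2+1,2,k,1)$-SEDF.

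For the converse, let $\{A_1,\dots,A_m\}$ be an $(n,m,k,1)$-SEDF in an abelian group $\G$, so $m\geq 2$. If $k=1$ then \eqref{eqn:basic} gives $n-1=m-1$, i.e.\ $n=m$; if $m=2$ then \eqref{eqn:basic} gives $n-1=k^2$, i.e.\ $n=k^2+1$; both match the claim. So assume $k\geq 2$ and $m\geq 3$ and seek a contradiction. Write $B=A_1\cup\dots\cup A_m$, so $|B|=mk\leq n$. First I would dispose of the partition case $B=\G$: here $\D(A_i,\G)$ has every element of $\G$ with multiplicity $k$ and $\D(A_i,\G\setminus A_i)=\G\setminus\{0\}$, so subtracting multisets gives $\D(A_i,A_i)=k\cdot\{0\}+(k-1)(\G\setminus\{0\})$; counting its $k^2$ terms yields $(k-1)(n-1)=k(k-1)$, hence $n=k+1$, and then $mk=k+1$ is impossible for $k\geq 2$. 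So we may assume $B\neq\G$.

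The heart of the argument is a character computation. For $A\subseteq\G$ and a character $\chi$ of $\G$ write $\hat A(\chi)=\sum_{a\in A}\chi(a)$; applying a nontrivial $\chi$ to the $i$-th defining equation gives $\hat A_i(\chi)\,\overline{\hat B(\chi)-\hat A_i(\chi)}=-1$. Setting $\alpha_i=\hat A_i(\chi)$ and $S=\hat B(\chi)=\sum_i\alpha_i$, this reads $|\alpha_i|^2-\alpha_i\overline S=1$; taking imaginary parts shows $\alpha_i\overline S\in\mathbb{R}$ for every $i$. Since $B\neq\G$, some nontrivial $\chi$ has $S\neq 0$; fixing such a $\chi$, each $\alpha_i=r_iS$ with $r_i=\alpha_i\overline S/|S|^2\in\mathbb{R}$, and substituting gives the single quadratic $|S|^2r_i^2-|S|^2r_i-1=0$ for all $i$, with roots $r_\pm$ satisfying $r_++r_-=1$ and $r_+r_-=-1/|S|^2$, so $r_-<0<1<r_+$. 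Let $p=\#\{i:r_i=r_+\}$. The relation $\sum_ir_i=1$ rules out $p=0$ and $p=m$, and (using $m\geq 3$, so $2p\neq m$) then forces $r_+=\tfrac{p-m+1}{2p-m}\in\mathbb{Q}$ together with $p\geq 2$ and $2p<m$; hence $|S|^2=-1/(r_+r_-)=\tfrac{(m-2p)^2}{(m-1-p)(p-1)}$ is rational. But $|S|^2=\hat B(\chi)\overline{\hat B(\chi)}\in\mathbb{Z}[\zeta_n]$ is an algebraic integer, so it must be a positive integer, and I would contradict this by the arithmetic fact that $\tfrac{(m-2p)^2}{(m-1-p)(p-1)}$ is never a positive integer for $2\leq p<m/2$: writing $t=m-2p$ and $M=m-2$ it equals $\tfrac{4t^2}{M^2-t^2}$, and cancelling $\gcd(M,t)^2$ reduces integrality to $(M_1^2-t_1^2)\mid 4$ with $\gcd(M_1,t_1)=1$ and $M_1>t_1\geq 1$; since $M_1^2-t_1^2=(M_1-t_1)(M_1+t_1)\geq 3$, this can hold only if $M_1^2-t_1^2=4$, which is impossible for such $M_1,t_1$. (For $m\in\{3,4\}$ the conditions $p\geq 2$, $2p<m$ are already impossible, so no such $\chi$ exists at all — again contradicting $B\neq\G$.)

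I expect the main obstacle to be precisely this final case, $k\geq 2$ and $m\geq 3$ with $B\neq\G$. For $m=3,4$ a single character already suffices, but for $m\geq 5$ the archimedean constraints alone leave a surviving candidate value of $|\hat B(\chi)|^2$, and the contradiction must be extracted by combining the rigidity forced by the equations — every $\alpha_i$ is a real multiple of $S$, taking only two values in fixed integer multiplicities — with the algebraic integrality of $|\hat B(\chi)|^2$, which is what reduces the problem to the arithmetic of $\tfrac{(m-2p)^2}{(m-1-p)(p-1)}$.
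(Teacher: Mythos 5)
The paper does not actually prove this statement: it is quoted from \cite{PatSti}, and the only ingredients supplied here are the two constructions for the ``if'' direction. So the comparison is really with the cited source. Your reading of the statement as the $\lambda=1$ case is the correct one (the surrounding text makes this explicit), and your constructions are equivalent to the paper's up to translation and negation --- indeed your $k=1$ construction correctly uses all $n$ singletons, where the paper's display lists only $n-1$ of them. Your converse is, as far as I can check, complete and correct, but it goes by a genuinely different route: where \cite{PatSti} (and this paper's own Section 3, e.g.\ Theorem~\ref{thm:dne}) argue by elementary counting of coincident external differences, you use character sums in the spirit of \cite{MarSti}. The chain holds up: $\hat A_i(\chi)\overline{(S-\hat A_i(\chi))}=-1$ forces $\alpha_i\overline S\in\mathbb{R}$, hence a two-valued spectrum $r_\pm$ with $r_++r_-=1$ and $r_+r_-=-1/|S|^2$; the constraints $2\le p<m/2$ follow from $\sum_i r_i=1$ and $r_+>1$; and the resulting $|S|^2=(m-2p)^2/\bigl((m-1-p)(p-1)\bigr)=4t^2/(M^2-t^2)$ is a rational algebraic integer that your $\gcd$ reduction correctly shows is never an integer. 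The $B=\G$ case and the degenerate $m\in\{3,4\}$ cases are also handled properly. One presentational nit: ``$m\ge 3$, so $2p\neq m$'' is not a one-step implication --- you need the intermediate observation that $2p=m$ would give $\sum_i r_i=p(r_++r_-)=p=1$ and hence $m=2$. What your approach buys is uniformity (it simultaneously recovers the $m\neq 3,4$ conclusion of Theorem~\ref{MarStiThm1} in the $\lambda=1$ case); what it costs is the reliance on algebraic integrality of $|\hat B(\chi)|^2$, which the purely combinatorial proofs avoid.
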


Constructions were given for both of these cases:
\begin{itemize}
\item Let $\G=(\mathbb{Z}_{k^2+1},+)$, $A_1=\{0,1,\ldots,k-1\}$ and $A_2=\{k,2k, \ldots, k^2\}$.  This is a $(k^2+1,2;k;1)$-SEDF.
\item Let $\G=(\mathbb{Z}_n,+)$ and $A_i=\{i\}$ for $1 \leq i \leq n-1$.  This is an $(n,n;1;1)$-SEDF.
\end{itemize}

Recent work by Martin and Stinson \cite{MarSti}, using character theory, has established various SEDF non-existence results, including the following:

\begin{theorem}\label{MarStiThm1}
Let $\{D_1,\ldots,D_m\}$ form an $(n,m,k,\lambda)$-SEDF.  Then $m \neq 3$ and $m \neq 4$.
\end{theorem}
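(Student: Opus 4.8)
The plan is to argue character-theoretically in the group ring $\mathbb{Z}[\G]$. Identify each $D_i$ with $\sum_{x\in D_i}x$, write $D^{(-1)}$ for the image of $D$ under $x\mapsto -x$, put $B=\sum_{i=1}^{m}D_i$, and write $\G$ also for $\sum_{g\in\G}g$ and $e$ for the identity of $\G$. The SEDF condition at index $i$ reads $D_i(B-D_i)^{(-1)}=\lambda(\G-e)$ in $\mathbb{Z}[\G]$; applying a nontrivial character $\chi$ (so $\chi(\G)=0$) and using $\chi(D^{(-1)})=\overline{\chi(D)}$ turns this into
\[
|\chi(D_i)|^{2}-\chi(D_i)\,\overline{\chi(B)}=\lambda\qquad(1\le i\le m)
\]
for every nontrivial $\chi$. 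I would first dispose of $k=1$: by Lemma~\ref{basiclemma} this forces $\lambda=1$, and the quoted $\lambda=1$ classification then gives $m=n$, so for $m\in\{3,4\}$ one obtains only the trivial SEDFs on $3$ and $4$ points; the substance of the theorem is therefore the case $k\ge 2$, assumed henceforth.

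Next I would fix a nontrivial $\chi$ and set $a_i=\chi(D_i)$ and $b=\chi(B)=\sum_i a_i$. The displayed relation gives $a_i\bar b=|a_i|^{2}-\lambda\in\mathbb{R}$ for each $i$. Suppose $b\ne 0$. Then $a_i=(|a_i|^{2}-\lambda)\,b/|b|^{2}$ is a real multiple of $b$, say $a_i=t_ib$ with $t_i\in\mathbb{R}$; substituting back yields $|b|^{2}(t_i^{2}-t_i)=\lambda$, a single quadratic satisfied by every $t_i$, whose two roots $t_{+},t_{-}$ obey $t_{+}+t_{-}=1$ and $t_{+}t_{-}=-\lambda/|b|^{2}<0$, so after labelling $t_{+}>1>0>t_{-}$. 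Thus $t_i=t_{+}$ for exactly $r$ of the indices and $t_i=t_{-}$ for the remaining $m-r$, where $1\le r\le m-1$ (the extremes being excluded because $\sum_i t_i=1$, as $b\ne 0$, while $mt_{-}<0$ and $mt_{+}>m>1$). Combining $rt_{+}+(m-r)t_{-}=1$ with $t_{-}=1-t_{+}$ gives $(2r-m)t_{+}=r-m+1$; the subcase $2r=m$ would force $r=m-1=m/2$, i.e.\ $m=2$, so otherwise $t_{+}=(r-m+1)/(2r-m)$, and a short sign analysis shows that $t_{+}>1$ only when $2\le r<m/2$. No such $r$ exists for $m\in\{3,4\}$, so $b\ne 0$ is impossible: $\chi(B)=0$ for every nontrivial $\chi$.

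Since $\chi(B)=0$ for all nontrivial $\chi$ and $\chi(B)=mk$ for the trivial one, Fourier inversion forces $B=(mk/n)\,\G$ in $\mathbb{Z}[\G]$; as $B$ is $\{0,1\}$-valued and $mk\ge 1$, we get $mk=n$, so $D_1,\dots,D_m$ partition $\G$, and then the displayed relation gives $|\chi(D_i)|^{2}=\lambda$ for all nontrivial $\chi$ and all $i$. (Equivalently $D_iD_i^{(-1)}=\lambda e+(k-\lambda)\G$, so each $D_i$ is an $(n,k,k-\lambda)$-difference set, though this is not needed.) Substituting $n=mk$ into~\eqref{eqn:basic} gives $\lambda(mk-1)=k^{2}(m-1)$. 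For $m=3$ this reads $\lambda(3k-1)=2k^{2}$, and since $\gcd(3k-1,k)=1$ we obtain $3k-1\mid 2$, whence $k=1$; for $m=4$ it reads $\lambda(4k-1)=3k^{2}$ with $\gcd(4k-1,k)=1$, giving $4k-1\mid 3$ and again $k=1$. In both cases $k=1$, contradicting $k\ge 2$; hence no $(n,m,k,\lambda)$-SEDF has $m=3$ or $m=4$.

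The hardest step will be the middle paragraph: one must verify that when $\chi(B)\ne 0$ the values $\chi(D_i)$ genuinely collapse onto just two real multiples of $\chi(B)$, and that the enumeration of the split $r$ is exhaustive — in particular the borderline case $2r=m$ (which survives only for $m=2$) and the boundary cases $r\in\{0,m\}$ must be ruled out explicitly. Once the constraint $2\le r<m/2$ is in hand, the conclusion for $m\le 4$ is immediate, and the concluding arithmetic with~\eqref{eqn:basic} is routine; the one caveat is that $k=1$ really does produce SEDFs (with $m=n$), so the statement is to be read for $k\ge 2$.
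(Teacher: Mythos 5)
Your argument is correct, but the comparison here is slightly unusual: the paper does not prove this theorem at all. It is imported verbatim from Martin and Stinson \cite{MarSti}, who prove it by character theory, so what you have written is in effect a self-contained reconstruction of their argument rather than an alternative to anything in this paper. The steps all check out. The group-ring identity $D_i(B-D_i)^{(-1)}=\lambda(\G-e)$ and its character form $|a_i|^2-a_i\bar b=\lambda$ are right; when $b\neq 0$ each $a_i=t_ib$ with $t_i$ a root of $|b|^2(t^2-t)=\lambda$, the two roots satisfy $t_+>1>0>t_-$, and the constraint $\sum_i t_i=1$ with $r$ indices at $t_+$ gives $t_+=(r-m+1)/(2r-m)$, which equals $1$ at $r=1$ and $0$ at $r=m-1$, so $t_+>1$ indeed forces $2\le r<m/2$ and hence $m\ge 5$ (consistent with the paper's remark that $m=5$ is the next open case). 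Thus $\chi(B)=0$ for all nontrivial $\chi$ when $m\in\{3,4\}$, forcing $n=mk$, and then \eqref{eqn:basic} gives $mk-1\mid m-1$, hence $k=1$. Two small remarks. First, you are right to flag the $k=1$ caveat: the all-singletons construction yields a $(3,3,1,1)$- and a $(4,4,1,1)$-SEDF, so the theorem as stated in this paper is literally false without the hypothesis $k\ge 2$ (which is present in the source); your reduction to $k\ge 2$ via Lemma~\ref{basiclemma} and the $\lambda=1$ classification is the correct reading, and it is also harmless for the paper's application in Corollary~\ref{cor:lambda=2}, where $k>2$ has already been established. Second, your parenthetical that each $D_i$ becomes an $(n,k,k-\lambda)$-difference set is also correct, since $(k^2-\lambda)/n=k-\lambda$ is exactly \eqref{eqn:basic} with $n=mk$, though as you note it is not needed.
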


\begin{theorem}\label{MarStiThm2}
If $\G$ is any group of prime order, and $k>1$ and $m>2$, then $\G$ admits no $\{D_1,\ldots,D_m\}$ which form an $(n,m,k,\lambda)$-SEDF.  
\end{theorem}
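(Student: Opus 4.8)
The plan is to run a character-theoretic argument in the integral group ring $\mathbb{Z}[\G]$, exploiting crucially that for $\G$ of prime order $p = n$ all nontrivial characters form a single orbit under $\mathrm{Gal}(\mathbb{Q}(\zeta_p)/\mathbb{Q}) \cong (\mathbb{Z}/p)^\times$. Write $D_i$ also for the group-ring element $\sum_{a\in D_i}a$, set $T = \sum_i D_i$, and let $D^{(-1)}$ be the image of $D$ under $g\mapsto -g$; then the SEDF condition is $D_i\bigl(T^{(-1)} - D_i^{(-1)}\bigr) = \lambda(\G - 0)$ for each $i$. Applying a nontrivial character $\chi$ (so $\chi(\G)=0$) and setting $\alpha_i = \chi(D_i)$, $\tau = \chi(T) = \sum_i \alpha_i$, this becomes $\alpha_i\overline{\tau} - |\alpha_i|^2 = -\lambda$ for all $i$; in particular $\alpha_i\overline{\tau}$ is real. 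I note that $\chi(D_iD_i^{(-1)}) = |\alpha_i|^2$ and $\chi(TT^{(-1)}) = |\tau|^2$ lie in $\mathbb{Z}[\zeta_p]$, and that passing to another nontrivial character replaces these by a Galois conjugate.

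First I would dispose of the partition case $km = p$. Here $T = \G$, so $\tau = 0$, forcing $|\chi(D_i)|^2 = \lambda$ for all $i$ and all nontrivial $\chi$. Fourier inversion applied to $D_iD_i^{(-1)}$ (whose $0$-coefficient is $k$) then forces $p \mid k^2 - \lambda$; reducing \eqref{eqn:basic}, i.e.\ $\lambda(p-1) = k^2(m-1)$, modulo $p$ gives $p \mid m\lambda$, and since $0 < \lambda < k < p$ by Lemma~\ref{basiclemma} we get $p \mid m$. But $m \mid p$, so $m = p$ and $k = 1$, contradicting $k > 1$.

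For $km < p$ the element $T$ is the indicator of a proper nonempty subset of $\mathbb{Z}_p$, so $\tau \neq 0$ (a nontrivial vanishing $\mathbb{Z}$-combination of $1,\zeta_p,\dots,\zeta_p^{p-1}$ cannot occur). Then $\alpha_i\overline{\tau}\in\mathbb{R}$ forces each $\alpha_i$ to be a real multiple $r_i\omega$ of $\omega = \tau/|\tau|$, and substituting shows each $r_i$ is a root of $x^2 - |\tau|x - \lambda = 0$; call its roots $r_+ > 0 > r_-$, so $r_i\in\{r_+,r_-\}$, $r_+ + r_- = |\tau|$, $r_+r_- = -\lambda$. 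If $s$ of the $m$ values $r_i$ equal $r_+$, then $\sum_i r_i = |\tau| = r_+ + r_-$ yields $(s-1)r_+ + (m-s-1)r_- = 0$, and a sign check forces $2 \le s \le m-2$ — in particular $m \ge 4$, which already settles $m = 3$. Eliminating $r_\pm$ gives $|\tau|^2 = \lambda(m-2s)^2/\bigl((s-1)(m-s-1)\bigr)\in\mathbb{Q}$. Since every $|\chi'(T)|^2$ is a Galois conjugate of this rational number, $|\chi'(T)|^2$ equals one fixed positive rational $N$ for all nontrivial $\chi'$; Fourier inversion on $TT^{(-1)}$ then forces $N\in\mathbb{Z}$ and $N(p-1) = km(p-km)$ (equivalently, $T$ is a $(p,km,km-N)$ difference set in $\mathbb{Z}_p$). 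Note also $r_\pm^2 = \tfrac12\bigl(N+2\lambda \pm \sqrt{N(N+4\lambda)}\bigr)$ are then the only possible values of $|\chi'(D_i)|^2$, independent of $\chi'$.

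The crux is the final contradiction, via a dichotomy on whether $N(N+4\lambda)$ is a perfect square. If it is, then $|\chi'(D_i)|^2$ is a Galois conjugate of the rational number $|\chi(D_i)|^2$, hence independent of $\chi'$; so $D_iD_i^{(-1)}$ has constant nontrivial character value, which Fourier inversion (with the $0$-coefficient equal to $k$) forces to be $k(p-k)/(p-1)$ for every $i$ — but then $|\chi(D_i)|^2$ is independent of $i$, giving $s\in\{0,m\}$ and contradicting $2\le s\le m-2$. If $N(N+4\lambda)$ is not a perfect square, then for an index $i$ with $r_i = r_+$ we have $r_+^2 = \chi(D_iD_i^{(-1)})\in\mathbb{Q}(\zeta_p)$, so the real quadratic field $\mathbb{Q}(\sqrt{N(N+4\lambda)})$ embeds in $\mathbb{Q}(\zeta_p)$; since the unique quadratic subfield of $\mathbb{Q}(\zeta_p)$ is $\mathbb{Q}\bigl(\sqrt{(-1)^{(p-1)/2}p}\bigr)$, we get $p\equiv 1\bmod 4$ and $p\mid N(N+4\lambda)$. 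Now $N\equiv (km)^2\bmod p$ from $N(p-1)=km(p-km)$, so $p\nmid N$, hence $p\mid N+4\lambda$; and $\lambda\equiv -k^2(m-1)\bmod p$ from \eqref{eqn:basic}. Therefore $0\equiv N+4\lambda\equiv (km)^2 - 4k^2(m-1) = k^2(m-2)^2\bmod p$, forcing $p\mid k$ or $p\mid (m-2)$ — impossible since $0 < k < p$ and $0 < m-2 < p$. I expect the main obstacle to be organizing this two-field dichotomy cleanly: the real content is that primality simultaneously forces $|\chi(T)|^2$ to be a single rational integer $N$ and sharply limits which quadratic irrationalities can arise as $|\chi(D_i)|^2$, so that the structural parameter $s$ gets squeezed from both sides; the non-vanishing of $\tau$ and the various Fourier-inversion integrality statements are routine but need care.
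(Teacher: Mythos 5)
This theorem is not proved in the paper at all: it is quoted from Martin and Stinson \cite{MarSti}, whose argument is likewise character-theoretic, so there is no internal proof to compare your attempt against. Checking your argument on its own terms, it appears complete and correct: the group-ring reduction to $\alpha_i\overline{\tau}-|\alpha_i|^2=-\lambda$, the non-vanishing of $\tau=\chi(T)$ for a proper nonempty subset of $\mathbb{Z}_p$, the confinement of the $r_i$ to the two roots of $x^2-|\tau|x-\lambda$ together with the sign analysis forcing $2\le s\le m-2$ (which already kills $m=3$), the Galois-conjugacy step forcing $|\chi'(T)|^2$ to equal a single integer $N$ for every nontrivial $\chi'$ (this is exactly where primality of $n$ enters, via the transitivity of $\mathrm{Gal}(\mathbb{Q}(\zeta_p)/\mathbb{Q})$ on nontrivial characters), and both horns of the final dichotomy on whether $N(N+4\lambda)$ is a square all hold up, including the congruence $N+4\lambda\equiv k^2(m-2)^2 \bmod p$ at the end. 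Two small presentational points: the partition case $km=p$ needs no character theory, since $p$ prime and $m\ge 2$ immediately force $k=1$; and in the first horn of the dichotomy you should note explicitly that $r_+^2\neq r_-^2$ (which follows from $r_++r_-=|\tau|>0$ and $r_+r_-=-\lambda<0$), since that is what converts ``$|\chi(D_i)|^2$ independent of $i$'' into the contradiction $s\in\{0,m\}$.
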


\section{New necessary conditions for SEDFs}

In this section, we will prove necessary conditions for the existence of SEDFs with $\lambda \geq 2$. 
\begin{theorem}\label{thm:dne}
Suppose there exists an $(n,m,k,2)$-SEDF with $m\geq 3$ and $k\geq 3$.   Then the following inequality must hold:
\begin{align}
\frac{2(k-1)(m-2)}{k(m-1)}\leq 1. \label{eq:lambda2}
\end{align}

\end{theorem}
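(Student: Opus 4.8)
The plan is to translate the $\lambda=2$ SEDF condition into character sums and then exploit an integrality phenomenon. For a non‑trivial character $\chi$ of $\G$ write $\widehat{A}(\chi)=\sum_{a\in A}\chi(a)$ and set $S(\chi)=\sum_{j=1}^{m}\widehat{A_j}(\chi)$. Applying $\chi$ to the $i$-th defining equation $\bigcup_{j\neq i}\D(A_i,A_j)=2(\G\setminus\{0\})$ gives, for every non‑trivial $\chi$ and every $i$,
\[
\widehat{A_i}(\chi)\,\overline{S(\chi)}-|\widehat{A_i}(\chi)|^{2}=-2 ,
\]
and summing over $i$ gives $|S(\chi)|^{2}=\sum_{i}|\widehat{A_i}(\chi)|^{2}-2m$. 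I would first extract a dichotomy for each fixed non‑trivial $\chi$: if $S(\chi)=0$ then $|\widehat{A_i}(\chi)|^{2}=2$ for all $i$; if $S(\chi)\neq 0$, then since $\widehat{A_i}(\chi)\overline{S(\chi)}$ is real we may write $\widehat{A_i}(\chi)=t_i\,S(\chi)$ with $t_i\in\mathbb{R}$ and $\sum_i t_i=1$, and substituting back shows each $t_i$ is a root of $T^{2}-T=2/|S(\chi)|^{2}$.

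The next step is to analyse that quadratic. Its roots $t_{\pm}=\tfrac12\bigl(1\pm\sqrt{1+4c}\bigr)$, with $c=2/|S(\chi)|^{2}>0$, satisfy $t_{+}>1>0>t_{-}$. Hence if $p$ denotes the number of indices $i$ with $t_i=t_{+}$ we cannot have $p=0$ or $p=m$, and solving $p\,t_{+}+(m-p)\,t_{-}=1$ forces (using $m\ge 3$) that $2\le p<m/2$ — in particular $m\ge 5$, so that for $m\in\{3,4\}$ every non‑trivial $\chi$ has $S(\chi)=0$. The same computation gives
\[
|S(\chi)|^{2}=\frac{2(m-2p)^{2}}{(p-1)(m-1-p)},\qquad
|\widehat{A_i}(\chi)|^{2}\in\Bigl\{\tfrac{2(m-1-p)}{p-1},\ \tfrac{2(p-1)}{m-1-p}\Bigr\},
\]
the two values being taken exactly $p$ and $m-p$ times respectively.

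Now comes the crucial point. Each $|\widehat{A_i}(\chi)|^{2}$ and $|S(\chi)|^{2}$ is an algebraic integer (a product of elements of $\mathbb{Z}[\zeta]$ for a suitable root of unity $\zeta$), while the displayed formulas exhibit them as rationals; hence they are non‑negative integers. The two possible values of $|\widehat{A_i}(\chi)|^{2}$ are then positive integers with product $4$, so they are $1$ and $4$; the alternative $\{2,2\}$ is excluded since it would force $m=2p$, contradicting $p<m/2$. Feeding $\{1,4\}$ back into the formula for $|S(\chi)|^{2}$ yields $|S(\chi)|^{2}=1$ in every case with $S(\chi)\neq 0$. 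Parseval applied to $N:=A_1\cup\cdots\cup A_m$ (of size $km$) gives $\sum_{\chi\neq 1}|S(\chi)|^{2}=km(n-km)$; since each summand is now $0$ or $1$, the number of non‑trivial $\chi$ with $S(\chi)\neq 0$ equals $km(n-km)$, and this cannot exceed the number $n-1$ of non‑trivial characters. Thus $km(n-km)\le n-1$.

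Finally, substituting $n=1+\tfrac12 k^{2}(m-1)$ from the necessary condition $\lambda(n-1)=k^{2}(m-1)$ and simplifying (a direct computation, valid because $k,m\ge 3$) turns $km(n-km)\le n-1$ into $2(k-1)(m-2)\le k(m-1)$, which is the asserted inequality. I expect the main obstacle to be precisely the third paragraph: the quadratic case‑analysis together with the integrality argument is what pins $|S(\chi)|^{2}$ to the single value $1$, and this is essential — replacing it by the soft estimates $|S(\chi)|^{2}\ge 1$ or $|\widehat{A_i}(\chi)|^{2}\ge 0$ leads only to identities equivalent to $\lambda(n-1)=k^{2}(m-1)$ and yields nothing new. (In fact the argument delivers the stronger bound $k(m-1)\le 2m$; we record only the weaker inequality of the theorem.)
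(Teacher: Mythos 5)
Your proof is correct, but it proceeds by a genuinely different route from the paper's. The paper's argument is entirely elementary: it fixes $v\in A_1$, considers the set $I$ of $k-1$ internal differences from $v$ and the sets $E_x$ of external differences from points $x\notin A_1$ into $\bigcup_{j\neq 1,i}A_j$, shows that $|E_{v'}\cap I|\geq 2$ would produce three equal external differences out of $A_1$ (contradicting $\lambda=2$), and then gets \eqref{eq:lambda2} by double counting the incidences $(\theta,E_x)$ with $\theta\in I\cap E_x$ and applying the pigeonhole principle. You instead run a character-sum analysis in the spirit of Martin and Stinson \cite{MarSti}: the identity $\widehat{A_i}(\chi)\overline{S(\chi)}-|\widehat{A_i}(\chi)|^2=-2$ is right, the dichotomy on $S(\chi)$, the quadratic satisfied by the $t_i$, the constraint $2\leq p<m/2$, the formulas for $|S(\chi)|^2$ and $|\widehat{A_i}(\chi)|^2$, and the integrality argument pinning $|\widehat{A_i}(\chi)|^2$ to $\{1,4\}$ and $|S(\chi)|^2$ to $1$ all check out, as does the Parseval count $\sum_{\chi\neq 1}|S(\chi)|^2=km(n-km)\leq n-1$. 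What each approach buys: the paper's argument is self-contained, works verbatim for general $\lambda$ (Theorem~\ref{thm:dnelambda}) and for unequal set sizes (Theorem~\ref{thm:dneGSEDF}), whereas your argument is specific to $\lambda=2$ but is substantially stronger --- it yields $k(m-1)\leq 2m$, which for $k,m\geq 3$ leaves only $k=m=3$, so it recovers Corollary~\ref{cor:lambda=2} directly without needing Theorems~\ref{MarStiThm1} and~\ref{MarStiThm2}.

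One point needs cleaning up. Your final sentence asserts that substituting $n-1=\tfrac12 k^2(m-1)$ ``turns'' $km(n-km)\leq n-1$ into \eqref{eq:lambda2}; that is not literally what the substitution gives. Writing $E=k(m-1)-2m$, one finds $n-km=\tfrac12(kE+2)$ and the Parseval bound reduces exactly to $E(km-1)\leq 0$, i.e.\ $k(m-1)\leq 2m$ --- the stronger bound you mention parenthetically, not \eqref{eq:lambda2} itself. The theorem still follows, because for $k\geq 2$ one has $k(m-1)\leq 2m\;\Rightarrow\;km\leq k+2m\leq 3k+2m-4$, and the last inequality is a rearrangement of \eqref{eq:lambda2}; but that one-line implication should be stated rather than implied by the word ``weaker''.
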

\begin{proof}
Suppose there exists an $(n,m,k,2)$-SEDF with $m\geq 3$ and $k\geq 3$.  We will show that, if \eqref{eq:lambda2} does not hold, then it is possible to find a point $v$ in $A_1$ and two internal differences from $v$ which correspond to a point $v^\prime$ in some $A_i$ for $i\neq 1$ and two external differences from $v^\prime$, and thereby to construct three external differences from $A_1$ that are all equal. 

Fix a point $v$ in $A_1$.  Let $I$ be the set of internal differences from $v$
\begin{equation*}
I=\{v-a \, | \, a\in A_1,\ a\neq v\}\subseteq \G\setminus\{0\}.
\end{equation*}
Then $|I|=k-1$, as $|A_1|=k$.

For $x\in A_i$ with $i\neq 1$ let $E_x$ be the set of external differences from $x$ to elements of $A_j$ for any $j\neq 1$:
\begin{equation*}
E_x=\{x-a \, | \, a\in A_j,\ j\neq 1,i\}.
\end{equation*}
Then $|E_x|=(m-2)k$ for any $x$.

From the definition of an $(n,m,k,2)$-SEDF, each nonzero group elements appears twice in the multiset of external differences from $A_j$ for each $j=1,2,\dotsc, m$, and hence $2m$ times in the multiset of all external differences
\begin{equation}
\{a-b \, | \, a\in A_i,\ b\in A_j,\ j\neq i\}=2m({\G}\setminus\{0\}). \label{eq:2externaldifferences}
\end{equation}  

Furthermore, since the multiset of external differences from $A_1$ comprises two copies of each nonzero group element, it is also the case that each nonzero group element occurs precisely twice as an external difference from some $A_i$ for $i\neq 1$ into $A_1$ (since the multiset of such external differences can be obtained by negating the multiset of external differences out of $A_1$). From this we can deduce that each nonzero group element occurs $2(m-2)$ times as an external difference between sets $A_i$ and $A_j$ with $i\neq j$ and $i,j\neq 1$, so
\begin{equation}
\bigcup_{x \in A_i, i\neq 1} E_x =2(m-2)({\G}\setminus \{0\}). \label{eq:2exunion}
\end{equation}

Suppose we could find an element $v^\prime \in A_i$ for some $i\neq 1$ for which $|E_{v^\prime} \cap I|\geq 2$.

\begin{figure}
\begin{center}
\begin{tikzpicture}[scale=1]

\draw (2,5) ellipse (2cm and 1cm);
\draw (11,4.5) ellipse (2cm and 1cm);
\draw (10,2.5) ellipse (2cm and 1cm);
\draw (7,4) ellipse (2cm and 1cm);

\node at (.8,5) {$A_1$};
\node at (11,4.5) {$A_\ell$};
\node at (10,2.5) {$A_j$};
\node at (7,4) {$A_i$};

\fill[black] (1.5,5)circle (0.05cm);
\fill[black] (2,4.2)circle (0.05cm);
\fill[black] (3,5.4)circle (0.05cm);

\node at (1.5,5.2) {$v$};
\node at (2,3.9) {$u$};
\node at (3,5.6) {$w$};

\draw[arrows=->,line width=1pt](1.5,5)--(2,4.2) node[midway,below] {$\delta_1$};
\draw[arrows=->,line width=1pt](1.5,5)--(3,5.4) node[midway,above] {$\delta_2$};

\fill[black] (8.5,4)circle (0.05cm);
\fill[black] (9,3.2)circle (0.05cm);
\fill[black] (10,4.4)circle (0.05cm);

\node at (8.5,4.3) {$v^\prime$};
\node at (9,2.8) {$u^\prime$};
\node at (10,4.7) {$w^\prime$};

\draw[arrows=->,line width=1pt](8.5,4)--(9,3.2) node[midway,below] {$\delta_1$};
\draw[arrows=->,line width=1pt](8.4,4)--(10,4.4) node[midway,above] {$\delta_2$};

\draw[arrows=->,line width=.5pt](1.5,5)--(8.5,4) node[midway,above] {$\gamma$};
\draw[arrows=->,line width=.5pt](2,4.2)--(9,3.2) node[midway,above] {$\gamma$};
\draw[arrows=->,line width=.5pt](3,5.4)--(10,4.4) node[midway,above] {$\gamma$};
\end{tikzpicture}
\end{center}
\caption{A point $v$ in $A_1$ and two internal differences from $v$, corresponding to a point $v^\prime$ in $A_i$ and two external differences from $v^\prime$, give rise to three equal external differences from $A_1$.}
\label{fig:drawingforlambda2}
\end{figure}
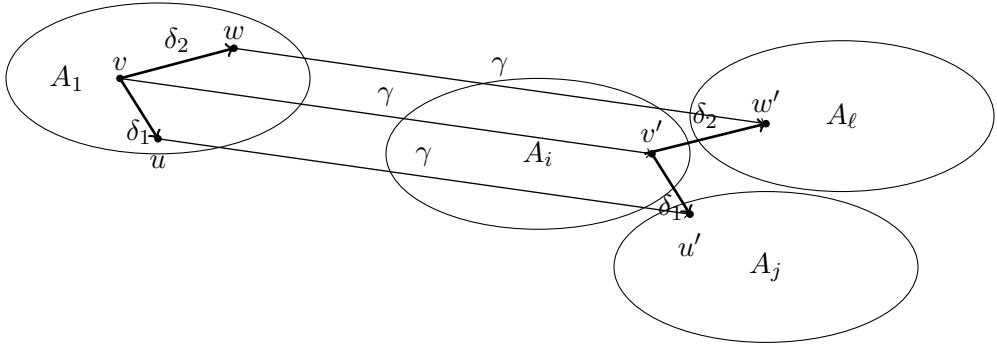

 Let $\delta_1$ and $\delta_2$ be distinct elements of $E_{v^\prime}\cap I$.  Let $u=v-\delta_1$ and $w=v-\delta_2$.  Then $u$ and $w$ are distinct elements of $A_1$, as $\delta_1$ and $\delta_2$ are distinct elements of $I$.  Let $u^\prime=v^\prime-\delta_1$ and $w^\prime=v^\prime-\delta_2$.  Then $u^\prime$ and $w^\prime$ are distinct elements of $\cup_{j\neq 1,i} A_j$ as $\delta_1$ and $\delta_2$ are distinct elements of $E_{v^\prime}$.  (We note that $u^\prime$ and $w^\prime$ may lie in distinct $A_j$ and $A_\ell$, or they may both occur in a single $A_j$ but that does not affect the rest of this argument.)  Let $v-v^\prime=\gamma\in{\G}\setminus\{0\}$. We observe that 
\begin{align*}
u-u^\prime &=(v-\delta_1)-(v^\prime-\delta_1),\\
&=v-v^\prime,\\
&=\gamma, \intertext{and}
w-w^\prime &=(v-\delta_2)-(v^\prime-\delta_2),\\
&=v-v^\prime,\\
&=\gamma.
\end{align*}
This would contradict the assumption that each nonzero group element occurs precisely twice as an external difference from $A_1$.  (This situation is illustrated in Figure \ref{fig:drawingforlambda2}.)  So we have proved that, for every $v^{\prime}$,  $|E_{v^\prime} \cap I|\leq 1$.

We now count the number $N$ of pairs $(\theta,E_x)$ where $\theta\in I\cap E_x$ and $x\in A_i$ for some $i\neq 1$. There are $k-1$ choices for $\theta$.  As each nonzero element of $\G$ occurs $2(m-2)$ times in $\bigcup_{x \in A_i, i\neq 1} E_x $, for each of these $\theta$ there are $2(m-2)$ values of $x$ for which $\theta\in E_x$, so $N=2(k-1)(m-2)$.

The number of distinct sets $E_x$ with $x\in A_i$ for some $i\neq 1$ is $(m-1)k$.  By the Pigeonhole Principle there exists $x$ for which the set $E_x$ contains at least
\begin{equation*}
\frac{N}{(m-1)k}=\frac{2(k-1)(m-2)}{(m-1)k}
\end{equation*}
elements of $I$. If this quantity was strictly greater than one we would have $|E_{v^\prime} \cap I|\geq 2$ for some $v^\prime$.
\end{proof}
Theorem~\ref{thm:dne} eliminates a wide range of values as potential parameters of an SEDF.  Let us consider the regions in which \eqref{eq:lambda2} does not hold.  These are illustrated in Figure~\ref{fig:lambda2}. Theorem~\ref{thm:dne} applies for $m\geq 3$ and $k\geq 3$.  When $m=3$, the left-hand side of \eqref{eq:lambda2} evaluates to $\frac{k-1}{k}$, which is never greater than $1$.  For $m=4$, it becomes $2\frac{2}{3}\frac{k-1}{k},$ which is greater than $1$ whenever $k>4$. For $m>4$, the threshold is achieved when $k\geq 4$.  When $k=3$, the left-hand side of \eqref{eq:lambda2} evaluates to $2\frac{2}{3}\frac{m-2}{m-1}$, which is greater than $1$ whenever $m>5$.  For $k>3$, the value of $1$ is exceeded for $m\geq 5$.

\begin{figure}
\begin{center}
\includegraphics[width=7cm]{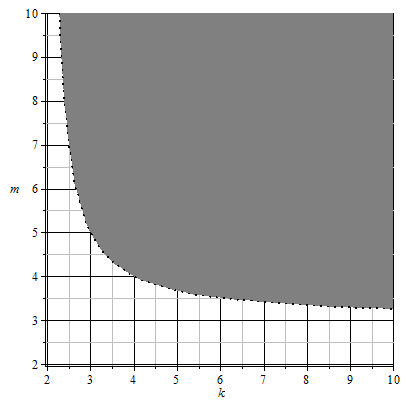}
\end{center}
\caption{A plot depicting values of $m$ and $k$ for which $\frac{2(k-1)(m-2)}{k(m-1)}>1$.  By Theorem~\ref{thm:dne}, if there exists an $(n,m,k,2)$-SEDF then the point $(k,m)$ lies outside the grey region.}
\label{fig:lambda2}
\end{figure}


These observations lead directly to the following corollary.

\begin{corollary}\label{cor:lambda=2}
An $(n,m,k,2)$-SEDF can exist only when $m=2$.
\end{corollary}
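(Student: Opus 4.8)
The plan is to derive Corollary~\ref{cor:lambda=2} from Theorem~\ref{thm:dne} together with the elementary parameter restrictions and the known non-existence results recalled in the preliminaries. First I would dispose of the small-$k$ cases: by Lemma~\ref{basiclemma}, an $(n,m,k,2)$-SEDF with $\lambda=2$ forces $k > 2$ (the case $k=1$ gives $\lambda=1$, and $k=2$ would require $\lambda < 2$), so in fact $k \geq 3$. Thus the only situation not covered by Theorem~\ref{thm:dne} is $m = 2$, which is exactly what the corollary permits, or $m$ small with $k = 3,4$ which I handle below.

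Next I would carry out the case analysis on $m$, using the observations made in the paragraph following Theorem~\ref{thm:dne}. By Theorem~\ref{MarStiThm1} we may assume $m \neq 3$ and $m \neq 4$, so it suffices to rule out $m \geq 5$. For $m \geq 5$ and $k \geq 4$, the discussion shows the left-hand side of \eqref{eq:lambda2} exceeds $1$, contradicting Theorem~\ref{thm:dne}; so no such SEDF exists. The remaining subcase is $m \geq 5$ and $k = 3$: here the left-hand side of \eqref{eq:lambda2} equals $\tfrac{8}{3}\cdot\tfrac{m-2}{m-1}$, which is strictly greater than $1$ precisely when $8(m-2) > 3(m-1)$, i.e. $5m > 13$, i.e. $m \geq 3$ — in particular for all $m \geq 5$. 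So Theorem~\ref{thm:dne} again gives a contradiction. Combining, no $(n,m,k,2)$-SEDF exists with $m \geq 3$, which leaves only $m = 2$ (recalling $m \geq 2$ always).

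The main obstacle, such as it is, is purely bookkeeping: making sure the cases $m = 3$ and $m = 4$ (where \eqref{eq:lambda2} is too weak, its left side being at most $1$) are genuinely covered — and they are, by Theorem~\ref{MarStiThm1} — and that the one remaining gap, $k = 3$ with $m = 5$, is closed by the sharper evaluation of \eqref{eq:lambda2} rather than by the coarse "$k \geq 4$" threshold. I would also note explicitly that $m = 2$ SEDFs with $\lambda = 2$ do exist (so the corollary is sharp), for instance certain partition-type examples, though this is not needed for the proof itself. With these pieces assembled the corollary follows in a few lines, essentially as a corollary should.
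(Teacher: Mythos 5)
Your overall strategy is the same as the paper's, but there is a genuine gap caused by an arithmetic slip in the $k=3$ case. The left-hand side of \eqref{eq:lambda2} at $k=3$ is $\frac{2(3-1)(m-2)}{3(m-1)}=\frac{4(m-2)}{3(m-1)}$, not $\frac{8}{3}\cdot\frac{m-2}{m-1}$; you appear to have read the paper's ``$2\frac{2}{3}$'' as the mixed number $\frac{8}{3}$ rather than as the product $2\cdot\frac{2}{3}=\frac{4}{3}$. With the correct value, $\frac{4(m-2)}{3(m-1)}>1$ if and only if $m>5$, so at $(k,m)=(3,5)$ the quantity equals exactly $1$, inequality \eqref{eq:lambda2} is satisfied, and Theorem~\ref{thm:dne} gives no contradiction there. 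Your claim that the sharper evaluation at $k=3$ closes the $(3,5)$ gap is therefore false, and your proof leaves the case $(n,m,k,\lambda)=(n,5,3,2)$ unexcluded.

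That surviving case must be eliminated by a different tool. The paper does this by computing $n$ from \eqref{eqn:basic}: $2(n-1)=3^2\cdot 4=36$, so $n=19$ is prime, and Theorem~\ref{MarStiThm2} forbids an SEDF with $k>1$ and $m>2$ in a group of prime order. Apart from this missing ingredient your argument follows the paper's route exactly (Lemma~\ref{basiclemma} to force $k\geq 3$, Theorem~\ref{MarStiThm1} for $m=3,4$, Theorem~\ref{thm:dne} for $m\geq 5$ with $k\geq 4$), so the fix is simply to add the appeal to Theorem~\ref{MarStiThm2} for $(k,m)=(3,5)$ rather than trying to squeeze it out of \eqref{eq:lambda2}.
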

\begin{proof}
It is immediate from the discussion following Theorem~\ref{thm:dne} that, for an $(n,m,k,2)$-SEDF to exist, its parameters must satisfy one of the following:
\begin{itemize}
\item $k\leq 2$;
\item $m\leq 3$;
\item $k=3$ and $m=4$;
\item $k=3$ and $m=5$;
\item $k=4$ and $m=4$.
\end{itemize}
By Lemma \ref{basiclemma}, we must have $k>2$, so the cases $k=1,2$ cannot occur.  By definition, $m \geq 2$, and the cases with $m=3,4$ cannot occur by Theorem \ref{MarStiThm1}.  The case $k=3$ and $m=5$ corresponds to $n=19$, and hence is ruled-out by Theorem \ref{MarStiThm2}.  Hence only the case when $m=2$ remains.
\end{proof}

In the case when $\lambda=m=2$, equation (\ref{eqn:basic}) shows that $n=\frac{k^2}{2}+1$ (note this implies $n$ and $k$ are coprime). We have the following SEDF with $k=4$ and $n=9$:
\begin{example}\label{lambda=m=2}
Let $\G=(\mathbb{Z}_3 \times \mathbb{Z}_3, +)$, let $A_1=\{(0,1),(0,2),(1,0),(2,0)\}$ and let $A_2=\{(1,1),(1,2),(2,1),(2,2)\}$.  Then $\{ A_1,A_2\}$ is a $(9,2,4,2)$-SEDF.
\end{example}
We shall show later that this example can be viewed as part of a family of SEDFs with $k=\frac{n-1}{2}$ (see Section \ref{sectionPaley}).

The general question ``for which values of $k$ does a $(\frac{k^2}{2}+1,2,k,2)$-SEDF exist?'' remains open.  There are various number-theoretic constraints; for example, prime $k$ are ruled-out by the following result.
\begin{lemma}
An  $(n,2,p,\lambda)$-SEDF, where $p$ is prime, can exist only for $\lambda=1$.
\end{lemma}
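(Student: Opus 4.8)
The plan is to combine the basic counting identity \eqref{eqn:basic} with Lemma~\ref{basiclemma}; no character theory or structural analysis is needed. First I would substitute $m=2$ and $k=p$ into \eqref{eqn:basic}, which gives $\lambda(n-1)=p^2$. Hence $\lambda$ is a positive divisor of $p^2$, and since $p$ is prime the only possibilities are $\lambda\in\{1,p,p^2\}$. This is precisely the point at which the primality hypothesis is used: it is what collapses the divisor set of $p^2$ down to three values.

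Next I would rule out the two large values using Lemma~\ref{basiclemma}. Since $p$ is prime we have $k=p\geq 2>1$, so we are in the second case of that lemma, which forces $\lambda<k=p$. This immediately excludes $\lambda=p$ and $\lambda=p^2$, leaving $\lambda=1$ as the only option, as required.

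There is essentially no obstacle in this argument; the two mild points to be careful about are that $p\geq 2$ (so that the $k>1$ branch of Lemma~\ref{basiclemma} applies) and that primality is what limits the divisors of $p^2$. If desired, one could append a sentence observing that $\lambda=1$ genuinely occurs here — for instance the $(p^2+1,2,p,1)$-SEDF from \cite{PatSti} — so the conclusion cannot be strengthened further.
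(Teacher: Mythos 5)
Your proposal is correct and follows essentially the same route as the paper's own proof: both derive $\lambda(n-1)=p^2$ from equation (\ref{eqn:basic}), use primality to restrict $\lambda$ to a divisor of $p^2$, and invoke Lemma \ref{basiclemma} to force $\lambda<p$, leaving only $\lambda=1$. The only difference is presentational — you spell out the divisor set $\{1,p,p^2\}$ explicitly, which the paper leaves implicit.
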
 
\begin{proof}
Suppose there exists an $(n,2,k,\lambda)$-SEDF where $k=p$, a prime.  By equation (\ref{eqn:basic}), $\lambda(n-1)=p^2$, and $\lambda<p$ by Lemma \ref{basiclemma}.  Since $\lambda$ must divide $p^2$, we must have $\lambda=1$.
\end{proof}

The analogue of Theorem~\ref{thm:dne} in the setting of arbitrary $\lambda$ is:

\begin{theorem}\label{thm:dnelambda}
Let $\lambda\geq 2$.  Suppose there exists an $(n,m,k,\lambda)$-SEDF with $m\geq 3$ and $k\geq \lambda+1$.   Then the following inequality must hold:
\begin{align}
\frac{\lambda(k-1)(m-2)}{(\lambda-1)k(m-1)}\leq 1. \label{eq:lambda}
\end{align}

\end{theorem}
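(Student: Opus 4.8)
The plan is to generalise the counting argument of Theorem~\ref{thm:dne} essentially verbatim, with the only genuinely new ingredient being the bound on $|E_{v'} \cap I|$. Fix $v \in A_1$ and let $I = \{v - a : a \in A_1, a \neq v\}$, so $|I| = k-1$; for $x \in A_i$ with $i \neq 1$ let $E_x = \{x - a : a \in A_j, j \neq 1, i\}$, so $|E_x| = (m-2)k$. Exactly as in the $\lambda = 2$ case, since the multiset of external differences out of $A_1$ is $\lambda(\G \setminus \{0\})$, negation shows each nonzero element occurs $\lambda$ times as an external difference \emph{into} $A_1$, and subtracting these from \eqref{eq:2externaldifferences} (now with $m\lambda$ in place of $2m$) shows each nonzero element occurs $\lambda(m-2)$ times among external differences between $A_i, A_j$ with $i, j \neq 1$, i.e. $\bigcup_{x \in A_i, i \neq 1} E_x = \lambda(m-2)(\G \setminus \{0\})$.

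The key step is to show $|E_{v'} \cap I| \leq \lambda - 1$ for every $v' \in A_i$, $i \neq 1$. Suppose instead $\delta_1, \dots, \delta_\lambda$ are $\lambda$ distinct elements of $E_{v'} \cap I$. Setting $\gamma = v - v'$ and $u_t = v - \delta_t \in A_1$, $u_t' = v' - \delta_t$ (which lies in some $A_j$, $j \neq 1, i$, since $\delta_t \in E_{v'}$), we get $u_t - u_t' = \gamma$ for each $t = 1, \dots, \lambda$. The $u_t$ are distinct (as the $\delta_t \in I$ are distinct) and the $u_t'$ are distinct (as the $\delta_t \in E_{v'}$ are distinct), so this produces $\lambda$ distinct pairs giving the external difference $\gamma$ from $A_1$ — but we also have $v - v' = \gamma$ itself, with $v \in A_1$ and $v' \in A_i$, $i \neq 1$, and this pair is distinct from all the $(u_t, u_t')$ pairs because $u_t' \in A_j$ with $j \neq i$ whereas $v' \in A_i$ (alternatively, $v \neq u_t$ since $\delta_t \neq 0$). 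Hence $\gamma$ would occur at least $\lambda + 1$ times as an external difference out of $A_1$, contradiction. This is where the hypothesis $k \geq \lambda + 1$ is used: we need $|I| = k - 1 \geq \lambda$ for it to even be possible to select $\lambda$ distinct $\delta_t$, so that the statement ``$|E_{v'} \cap I| \leq \lambda - 1$'' is the sharp obstruction rather than being vacuous.

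Finally, count pairs $N = |\{(\theta, E_x) : \theta \in I \cap E_x,\ x \in A_i,\ i \neq 1\}|$. Each of the $k-1$ elements $\theta \in I$ lies in $E_x$ for exactly $\lambda(m-2)$ values of $x$ (from the union computation above), so $N = \lambda(k-1)(m-2)$. There are $(m-1)k$ sets $E_x$ with $x \in A_i$, $i \neq 1$, so by pigeonhole some $E_{v'}$ contains at least $N/((m-1)k) = \lambda(k-1)(m-2)/((m-1)k)$ elements of $I$. If this exceeds $\lambda - 1$ we contradict the bound just established, so we must have
\begin{equation*}
\frac{\lambda(k-1)(m-2)}{(m-1)k} \leq \lambda - 1,
\end{equation*}
which rearranges to \eqref{eq:lambda}. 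The only subtlety worth double-checking is the case $\lambda = 1$, where the right-hand side is $0$ and the argument degenerates — but the hypothesis is $\lambda \geq 2$, so division by $\lambda - 1$ is legitimate and the inequality is non-trivial. The main obstacle is really just getting the $\lambda$-fold version of the ``three equal external differences'' contradiction stated cleanly, in particular verifying that the pair $(v, v')$ is genuinely distinct from the constructed pairs so that we reach $\lambda + 1$ rather than only $\lambda$ repetitions; everything else is a routine transcription of the $\lambda = 2$ proof.
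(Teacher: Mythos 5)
Your proof is correct and follows essentially the same route as the paper, which itself only sketches this theorem as a direct adaptation of the $\lambda=2$ argument (same sets $I$ and $E_x$, the same negation argument giving $\bigcup E_x=\lambda(m-2)(\G\setminus\{0\})$, the bound $|E_{v'}\cap I|\le\lambda-1$ via $\lambda+1$ equal external differences from $A_1$, and the same pigeonhole count). Your explicit verification that the pair $(v,v')$ is distinct from the pairs $(u_t,u_t')$ is a detail the paper leaves implicit, and both of your justifications for it are valid.
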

\begin{proof}
The proof of Theorem~\ref{thm:dne} can readily be adapted to the case of general $\lambda$.  In this setting, if \eqref{eq:lambda} does not hold, then it is possible to find a point $v\in A_1$ and $\lambda$ internal differences from $v$, which correspond to a point $v^\prime$ in some $A_i$ with $i\neq1$ and $\lambda$ external differences from $v^\prime$.  This would then allow the construction of $\lambda+1$ equal external differences from $A_1$ by the same approach we used in the proof of Theorem~\ref{thm:dne}.

The definitions and cardinalities of $I$ and $E_x$ carry over exactly.  Equations \eqref{eq:2externaldifferences} and \eqref{eq:2exunion} become
\begin{align*}
\{a-b \, | \, a\in A_i,\ b\in A_j,\ j\neq i\}&=\lambda m({\G}\setminus\{0\}), \intertext{and}
\bigcup_{x \in A_i, i\neq 1} E_x &=\lambda(m-2)({\G}\setminus \{0\}).
\end{align*}

The situation illustrated in Figure \ref{fig:drawingforlambda2} carries over for general $\lambda$ in the natural way, and our conclusion is that, for every $v^\prime \in A_i$ ($i\neq 1$),  we must have $|E_{v^\prime} \cap I|\leq \lambda-1$.

We count the number $N$ of pairs $(\theta,E_x)$ where $\theta\in I\cap E_x$ and $x\in A_i$ for some $i\neq 1$. There are $k-1$ choices for $\theta$.  Since each nonzero element of $\G$ occurs $\lambda(m-2)$ times in $\bigcup_{x \in A_i, i\neq 1} E_x $ we see that for each of these $\theta$ there are $\lambda(m-2)$ values of $x$ for which $\theta\in E_x$, so $N=\lambda(k-1)(m-2)$.  Applying the Pigeonhole Principle as before, there exists $x$ for which $E_x$ contains at least
\begin{equation*}
\frac{\lambda(k-1)(m-2)}{k(m-1)}
\end{equation*}
 elements of $I$.  If this was strictly greater than $\lambda-1$ we would have $|E_{v^\prime} \cap I|\geq \lambda$ for some $v^{\prime}$.
\end{proof}

Theorem~\ref{thm:dnelambda} applies for $m\geq 3$ and $k\geq \lambda+1$.  When $k=\lambda+1$, the left-hand side of \eqref{eq:lambda} evaluates to $\left(\frac{\lambda^2}{\lambda^2-1}\right)\left(\frac{m-2}{m-1}\right)$, which is greater than one whenever $m>\lambda^2+1$.  For $k>\lambda+1$, we observe that the left-hand side of \eqref{eq:lambda} can be written $\left(\frac{\lambda k -\lambda}{\lambda k-k}\right)\left( \frac{m-2}{m-1}\right)> \left(\frac{\lambda k -k+1}{\lambda k-k}\right)\left( \frac{m-2}{m-1}\right)$, which is greater than one if $m-2>(\lambda-1)k$.

As before, if $m=3$ the left-hand side of \eqref{eq:lambda} cannot be greater than one, as $\frac{\lambda}{\lambda-1}\leq 2$.  When $m=4$, it evaluates to $\left(\frac{2}{3}\frac{\lambda}{\lambda-1}\right)\frac{k-1}{k},$ which is greater than one only in the case where $\lambda=2$ and $k>4$. For larger values of $m$, express the left-hand side as $\left(\frac{(\lambda m -\lambda)-\lambda}{(\lambda m -\lambda) -(m-1)}\right)\left(\frac{k-1}{k}\right)$; we would need $m-1>\lambda$ in order for this to be greater than one.  In this case we have $\left(\frac{(\lambda m -\lambda)-\lambda}{(\lambda m -\lambda) -(m-1)}\right)\left(\frac{k-1}{k}\right)\geq\left(\frac{(\lambda m -\lambda)-(m-1)+1}{(\lambda m -\lambda) -(m-1)}\right)\left(\frac{k-1}{k}\right)$, and for $k-1>(\lambda-1)(m-1)$ the value is greater than one.  The situation is illustrated in Figure~\ref{fig:lambda}.

\begin{figure}
\begin{center}
\includegraphics[width=8.5cm]{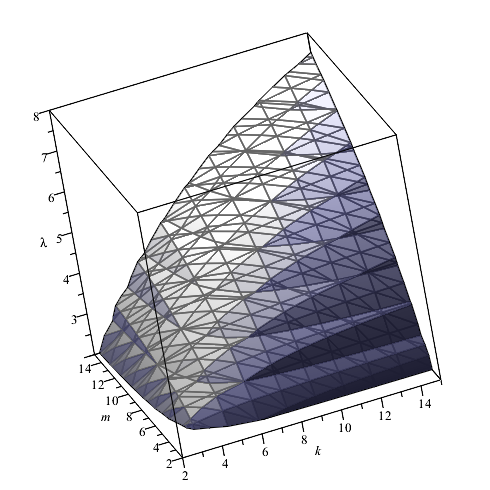}
\end{center}
\caption{A plot depicting the surface consisting of values of $m$, $k$ and $\lambda$ for which $\frac{\lambda(k-1)(m-2)}{(\lambda-1)k(m-1)}=1$.  By Theorem~\ref{thm:dnelambda}, if there exists an $(n,m,k,\lambda)$-SEDF then the point $(k,m,\lambda)$ lies on or above the surface.}
\label{fig:lambda}
\end{figure}

\begin{corollary}\label{cor:lambda_gen}
If there exists an $(n,m,k,\lambda)$-SEDF, then its parameters must satisfy at least one of the following:
\begin{itemize}
\item $\lambda=1$;
\item $m= 2$;
\item $k=\lambda+1$ and $m\leq\lambda^2+1$;
\item $k>\lambda+1$ and $m\leq (\lambda-1)k+2$;
\item $m\geq 5$, $2\leq \lambda\leq m-2$ and $k \leq (\lambda-1)(m-1)+1$.
\end{itemize}
\end{corollary}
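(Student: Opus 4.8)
The plan is to deduce this corollary from Theorem~\ref{thm:dnelambda} in the same spirit in which Corollary~\ref{cor:lambda=2} was deduced from Theorem~\ref{thm:dne}: one translates the single inequality \eqref{eq:lambda} into the case-by-case bounds in the list. First I would dispose of the two trivial disjuncts. If $\lambda = 1$ the first holds and if $m = 2$ the second holds, so from now on I assume $\lambda \geq 2$ and $m \geq 3$. Under $\lambda \geq 2$, Lemma~\ref{basiclemma} forces $k > 1$ and hence $\lambda < k$, i.e.\ $k \geq \lambda+1$; this is precisely the hypothesis needed to invoke Theorem~\ref{thm:dnelambda}, so \eqref{eq:lambda} must hold.

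Next I would extract bullets three and four by splitting on the value of $k$. If $k = \lambda+1$, the left-hand side of \eqref{eq:lambda} equals $\bigl(\tfrac{\lambda^{2}}{\lambda^{2}-1}\bigr)\bigl(\tfrac{m-2}{m-1}\bigr)$, and an elementary rearrangement shows this is at most $1$ precisely when $m \leq \lambda^{2}+1$; this is the third bullet. If instead $k > \lambda+1$, then $\lambda k - \lambda > \lambda k - k + 1$, so the left-hand side of \eqref{eq:lambda} strictly exceeds $\bigl(\tfrac{(\lambda-1)k+1}{(\lambda-1)k}\bigr)\bigl(\tfrac{m-2}{m-1}\bigr)$; were $m - 2 > (\lambda-1)k$, this lower bound would itself exceed $1$, contradicting \eqref{eq:lambda}, so $m \leq (\lambda-1)k+2$, which is the fourth bullet. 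Since for $\lambda \geq 2$ and $m \geq 3$ every parameter triple satisfies either $k = \lambda+1$ or $k > \lambda+1$, this already establishes the corollary; the role of the fifth bullet is to record the additional information available in a particular regime.

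To obtain the fifth bullet I would start again from \eqref{eq:lambda}, now writing its left-hand side as $\bigl(\tfrac{(\lambda m - \lambda) - \lambda}{(\lambda m - \lambda) - (m-1)}\bigr)\bigl(\tfrac{k-1}{k}\bigr)$. When $m \geq 5$ and $2 \leq \lambda \leq m-2$ the numerator of the first factor exceeds its denominator by at least $1$, so if in addition $k - 1 > (\lambda-1)(m-1)$ then the product is forced above $1$, contradicting \eqref{eq:lambda}; hence $k \leq (\lambda-1)(m-1)+1$, the fifth bullet. I do not expect a genuine obstacle anywhere: every estimate is an elementary manipulation of \eqref{eq:lambda}, and all of them already appear in the discussion following Theorem~\ref{thm:dnelambda}. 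The only subtlety worth flagging is the one noted in the first paragraph, namely verifying that Theorem~\ref{thm:dnelambda} genuinely applies because Lemma~\ref{basiclemma} forces $k \geq \lambda+1$, rather than our having to assume it.
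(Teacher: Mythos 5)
Your proof is correct, and it rests on the same engine as the paper's: invoke Theorem~\ref{thm:dnelambda} (after using Lemma~\ref{basiclemma} to get $k\geq\lambda+1$, exactly as the paper does) and then convert the inequality \eqref{eq:lambda} into the listed bounds via the algebra already worked out in the discussion following that theorem. The one genuine difference is in how the case analysis is organised, and it is in your favour. The paper splits on $m$ as well as on $k$, which produces intermediate cases $m=3$ and $m=4$ that it then eliminates by appealing to the character-theoretic result Theorem~\ref{MarStiThm1}. You instead observe that the split on $k$ alone ($k=\lambda+1$ versus $k>\lambda+1$) is already exhaustive once $\lambda\geq 2$ and $m\geq 3$ are assumed, so the third and fourth bullets cover every remaining parameter triple and the stated disjunction follows with no input from \cite{MarSti}; the fifth bullet is then, as you say, extra information valid in its own regime rather than a logically necessary disjunct. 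This makes your derivation self-contained within the paper's own Section 3, at the cost of the corollary carrying a redundant clause. All of your individual manipulations check out: for $k=\lambda+1$ the left-hand side of \eqref{eq:lambda} exceeds $1$ exactly when $m>\lambda^2+1$; for $k>\lambda+1$ the integer inequality $\lambda k-\lambda\geq(\lambda-1)k+1+1$ justifies the strict lower bound you use; and for the fifth bullet the numerator $\lambda(m-2)$ exceeds the denominator $(\lambda-1)(m-1)$ by at least $1$ precisely when $\lambda\leq m-2$, which is what your pigeonhole-style estimate needs.
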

\begin{proof}
The following small cases are outside the scope of Theorem \ref{thm:dnelambda} and hence cannot be ruled out by the theorem:
\begin{itemize}
\item $\lambda=1$;
\item $m \leq 2$;
\item $k\leq \lambda$.
\end{itemize}
However, $m \geq 2$ by definition, and by Lemma \ref{basiclemma}, the case $k \leq \lambda$ cannot occur.
By the discussion following the proof of the theorem, we know that any $(n,m,k,\lambda)$-SEDF within the scope of the theorem must have parameters satisfying one (or more) of the following:
\begin{itemize}
\item $k=\lambda+1$ and $m\leq\lambda^2+1$;
\item $k>\lambda+1$ and $m\leq (\lambda-1)k+2$;
\item $m=3$;
\item $m=4$, and either $\lambda>2$ or $k \leq 3$;
\item $m\geq 5$, $2\leq \lambda\leq m-2$ and $k \leq (\lambda-1)(m-1)+1$.
\end{itemize}
The cases with $m=3$ and $m=4$ are ruled out by Theorem \ref{MarStiThm1}.  
\end{proof}

\section{Existence results and characterizations when $m=2$}\label{sectionPaley}

In this section, we consider the $m=2$ case in full generality, i.e. for all $\lambda>1$.  We consider $(n,2,k,\lambda)$-SEDFs with largest possible value of $k$ (and hence $\lambda$).

For number-theoretic reasons, it is not possible to have an SEDF comprising two sets of size $k=\frac{n}{2}$.  In this case, equation (\ref{eqn:basic}) would require $\lambda(2k-1)=k^2$; this cannot happen as $2k-1$ is coprime to $k$. 
The largest possible value of $k$ is therefore $k=\frac{n-1}{2}$ (here $n$ must be odd); this corresponds to the largest possible value of $\lambda=\frac{n-1}{4}$.

Denote by $\G^*$ the non-identity elements of $\G$.  We consider constructions comprising two sets, each of size $\frac{n-1}{2}$, which partition $\G^*$.  The following result, based on a classic cyclotomic construction,  guarantees an infinite family of such SEDFs (this also appears in \cite{WeiZha}).

\begin{proposition}
For any prime power $q$ with $q \equiv 1 \mod 4$, there exists a $(q,2,\frac{q-1}{2},\frac{q-1}{4})$-SEDF.
\end{proposition}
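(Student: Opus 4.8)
The plan is to use the classical cyclotomic (Paley) construction. Take $\G=(\mathbb{F}_q,+)$, let $A_1$ be the set of non-zero squares of $\mathbb{F}_q$, and let $A_2$ be the set of non-squares. These two sets are disjoint, each has cardinality $\frac{q-1}{2}=k$, and together they partition $\G^{*}$; moreover \eqref{eqn:basic} is satisfied with $\lambda=\frac{q-1}{4}$. Since $q\equiv 1 \pmod 4$, the element $-1$ is a square, so $-A_1=A_1$ and $-A_2=A_2$. Consequently the multiset $\D(A_2,A_1)$, being the negation of $\D(A_1,A_2)$ and hence, by negation-invariance of $A_1$ and $A_2$, equal to $\D(A_1,A_2)$ itself, satisfies the SEDF equation as soon as $\D(A_1,A_2)$ does. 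So it suffices to prove that $\D(A_1,A_2)=\frac{q-1}{4}(\G\setminus\{0\})$.

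To establish this, I would fix a non-zero $c\in\mathbb{F}_q$ and compute the multiplicity $N(c)$ of $c$ in $\D(A_1,A_2)$, which is the number of $x\in\mathbb{F}_q$ for which $x$ is a non-zero square and $x-c$ is a non-zero non-square. Let $\chi$ be the quadratic character of $\mathbb{F}_q^{*}$, extended by $\chi(0)=0$. Using the indicator expressions $\tfrac{1}{2}(1+\chi(z))$ for ``$z$ a non-zero square'' and $\tfrac{1}{2}(1-\chi(z))$ for ``$z$ a non-zero non-square'', one obtains
\begin{equation*}
N(c)=\frac14\sum_{x\in\mathbb{F}_q\setminus\{0,c\}}\bigl(1+\chi(x)\bigr)\bigl(1-\chi(x-c)\bigr).
\end{equation*}
Expanding the product gives four sums. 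The constant sum equals $q-2$. The two linear sums contribute $\sum_{x\neq 0,c}\chi(x)=-\chi(c)$ and $-\sum_{x\neq 0,c}\chi(x-c)=\chi(c)$ (using $\sum_{z\neq 0}\chi(z)=0$ and $\chi(-1)=1$), which cancel. The cross sum is $-\sum_{x\in\mathbb{F}_q}\chi\bigl(x(x-c)\bigr)$, which equals $1$ because $\sum_{x\in\mathbb{F}_q}\chi(x^2-cx)=-1$ for $c\neq0$. Hence $N(c)=\frac14\bigl((q-2)+1\bigr)=\frac{q-1}{4}$, independent of $c$, which is exactly the required multiset identity, and the proof is complete.

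The construction and its verification are entirely classical, so I do not anticipate a genuine obstacle: the only ingredient beyond routine bookkeeping is the quadratic character sum evaluation $\sum_{x\in\mathbb{F}_q}\chi(x^2-cx)=-1$, the standard value of a quadratic character sum over a polynomial with non-zero discriminant (equivalent to the assertion that the non-zero squares form the Paley partial difference set). A character-sum-free alternative is to take the known internal-difference distribution of that partial difference set as given and deduce the external-difference distribution of the complementary pair $\{A_1,A_2\}$ from it, again using the negation-invariance of $A_1$ and $A_2$.
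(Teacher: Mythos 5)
Your construction is the same one the paper uses (squares and non-squares of $\mathbb{F}_q$, with the negation-invariance coming from $q\equiv 1\pmod 4$), and your argument is correct, but the verification goes by a different route. The paper does not compute any multiplicities from scratch: it cites Wilson's result that the cosets of a multiplicative subgroup $H$ of $GF(q)^*$ form a $(q,\frac{q-1}{|H|},|H|,q-1-|H|)$-EDF, so that each nonzero element occurs $\frac{q-1}{2}$ times in $\D(A_1,A_2)\cup\D(A_2,A_1)$, and then uses the negation symmetry only to exhibit a bijection $(x_1,x_2)\mapsto(-x_2,-x_1)$ between the representations of $x$ as an $A_1$-to-$A_2$ difference and as an $A_2$-to-$A_1$ difference, splitting the total count evenly into $\frac{q-1}{4}+\frac{q-1}{4}$. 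You instead use the symmetry to reduce to a single multiset $\D(A_1,A_2)$ and then evaluate its multiplicities directly via the quadratic character, with the standard evaluation $\sum_{x\in\mathbb{F}_q}\chi(x^2-cx)=-1$ doing the work; your bookkeeping (the constant term $q-2$, the cancelling linear terms, the cross term contributing $+1$) checks out and gives $N(c)=\frac{q-1}{4}$. What each buys: the paper's argument is shorter and needs no character sums, at the cost of importing the EDF count as a black box; yours is self-contained and makes the Paley-PDS structure (which the paper exploits in Section~\ref{sectionPaley}) visible already in the computation, at the cost of invoking the quadratic character sum identity.
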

\begin{proof}
Let $\alpha$ be a primitive element of $GF(q)^*$, the multiplicative group of the finite field $GF(q)$.   Let $C=\langle \alpha^2 \rangle$, the subgroup of index $2$ in $GF(q)^*$.  Observe that $-1$ is a square in $GF(q)^*$, and $C$ is the set of squares in $GF(q)^*$.  Take $A_1:=C$ and $A_2:=\alpha C$, the coset of $C$ in $GF(q)^*$.  It is known (see, for example, \cite{Wil}) that the cosets of a multiplicative subgroup $H$ of $GF(q)^*$ form a $(q,\frac{q-1}{|H|}, |H|, q-1-|H|)$-EDF in the additive group of $GF(q)$.  Thus $\{A_1,A_2\}$ form a $(q,2,\frac{q-1}{2},\frac{q-1}{2})$-EDF, i.e. the multiset of external differences between $A_1$ and $A_2$, comprises $\frac{q-1}{2}$ occurrences of each non-zero element of $GF(q)$.   In order to be a $(q,2,\frac{q-1}{2},\frac{q-1}{4})$-SEDF, each non-zero element must arise $\frac{q-1}{4}$ times in the multiset $A_1-A_2$ and $\frac{q-1}{4}$ times in the multiset $A_2-A_1$.   Let $x \in GF(q)^*$, and define  
$D_1(x):=\{ (g,h): x=g-h, g \in A_1, h \in A_2 \}$ and $D_2(x):=\{ (g,h): x=g-h, g \in A_2, h \in A_1 \}$.
We exhibit a bijection between $D_1(x)$ and $D_2(x)$: let $(x_1,x_2) \in D_1(x)$.  Then $x=x_1-x_2=(-x_2)-(-x_1)$.  Since $-1$ is a square, we have $A_1=-A_1$ and $A_2=-A_2$, so that $(-x_2,-x_1) \in D_2(x)$ as required.
\end{proof}

This construction can be viewed as a partition of the non-identity group elements into Paley partial difference sets.  

\begin{definition}
A $k$-element subset $D$ of an additive group $\G$ of order $v$ is a $(v,k,\lambda,\mu)$ \emph{partial difference set (PDS)} if the multiset $\mathcal{D}(D)=\{d_1-d_2 \, | \, d_1,d_2 \in D, d_1 \neq d_2 \}$ contains each non-identity element of $D$ exactly $\lambda$ times and each non-identity element of $\G \setminus D$ exactly $\mu$ times. A PDS is called \emph{abelian} if the group $\G$ is abelian.  A PDS $D$ is called \emph{regular} if $D$ does not contain the identity and $D=-D$.  A regular PDS with parameters $(v,\frac{v-1}{2}, \frac{v-5}{4}, \frac{v-1}{4})$, where $v \equiv 1 \mod 4$, is said to be \emph{of Paley type}.
\end{definition}

Further details on Paley PDSs can be found in \cite{Ma}.  The approach of constructing EDFs by partitioning with PDSs is introduced in \cite{DavHucMul}; in particular, Theorem 3.4 of \cite{DavHucMul} establishes that any set of $u$ $(v,k,\lambda,\mu)$ PDSs which partition the non-identity elements of the group $\G$ will form an EDF in $\G$ with parameters  $(ku+1,u, k,ku-1-\lambda-(u-1)\mu)$.  We will show that the PDS approach, applied in the $m=2$ setting using Paley PDSs, will in fact yield SEDFs.

We begin by establishing a useful lemma.
\begin{lemma}\label{D_2Paley}
Let $\G$ be an additive group of order $v$, let $D_1$ be a Paley $(v, \frac{v-1}{2}, \frac{v-5}{4}, \frac{v-1}{4})$ PDS in $\G$, and set $D_2=\G^* \setminus D_1$.  Then $D_2$ is also a Paley PDS with the same parameters as $D_1$.
\end{lemma}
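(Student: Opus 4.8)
The plan is to show that $D_2 = \G^* \setminus D_1$ inherits the PDS property by a direct counting argument, exploiting the fact that $D_1$ is \emph{regular} (so $D_1 = -D_1$ and $0 \notin D_1$, hence the same holds for $D_2$). First I would fix a non-identity element $x \in \G^*$ and count the number of representations $r(x) = |\{(d,d') : d,d' \in D_2,\ d - d' = x\}|$; the goal is to show $r(x) = \frac{v-5}{4}$ when $x \in D_2$ and $r(x) = \frac{v-1}{4}$ when $x \in D_1$. Note that these are exactly the parameters of $D_1$, so we must show $r(x)$ equals the number of representations of $x$ as a difference from $D_1$ — that is, $\mathcal{D}(D_2) = \mathcal{D}(D_1)$ as multisets.

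The key computational step is to express $r(x)$ via inclusion–exclusion over $\G^* = D_1 \sqcup D_2$. For a fixed $x \ne 0$, partition the solutions to $d - d' = x$ with $d, d' \in \G^*$ according to which of $d, d'$ lie in $D_1$ versus $D_2$. Writing $N = |\{(d,d') \in \G^* \times \G^* : d - d' = x\}|$, and letting $\mathcal{D}_{1}(x)$, $\mathcal{D}_{2}(x)$, etc. denote the counts with both in $D_1$, both in $D_2$, one in each, we have $N = \mathcal{D}_{11}(x) + \mathcal{D}_{22}(x) + \mathcal{D}_{12}(x) + \mathcal{D}_{21}(x)$, where $\mathcal{D}_{11}(x)$ is the given PDS count for $D_1$. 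I would compute $N$ directly: for each $d' \in \G^*$, the value $d = d' + x$ lies in $\G^*$ unless $d = 0$, i.e. unless $d' = -x$; so $N = v - 2$ if $-x \neq x$ (always true here since $v$ is odd, so $x \neq -x$ for $x \neq 0$), giving $N = v-2$. Next, the mixed counts: $\mathcal{D}_{12}(x) + \mathcal{D}_{21}(x)$ is precisely the number of external differences equal to $x$ in the pair $\{D_1, D_2\}$; since $\{D_1, D_2\}$ forms a $(v, 2, \frac{v-1}{2}, \frac{v-1}{2})$-EDF (by Theorem 3.4 of \cite{DavHucMul}, or the cyclotomic computation), this equals $v-1$. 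Solving, $\mathcal{D}_{22}(x) = (v-2) - (v-1) + \mathcal{D}_{11}(x)$ is wrong by a sign — so I would instead be careful: $\mathcal{D}_{22}(x) = N - \mathcal{D}_{11}(x) - (\mathcal{D}_{12}(x) + \mathcal{D}_{21}(x))$, but $N$ must be recomputed as the number of pairs with $d, d'$ both nonzero \emph{and distinct}, or the bookkeeping adjusted for $d = d'$ (which forces $x = 0$, excluded). Thus $\mathcal{D}_{22}(x) = (v-2) - \mathcal{D}_{11}(x) - (v-1)$, which is negative — signalling that the EDF count of $v-1$ double-counts, or that $N$ should be larger. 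The resolution: $N$ counts ordered pairs from $\G^*$, which is $v - 2$ only after excluding $d' = -x$; but one should also exclude nothing else, so $N = v-2$ is right, meaning the mixed count cannot be $v - 1$. In fact the total number of ordered pairs $(d, d')$ in $\G \times \G$ with $d - d' = x$ is $v$; removing those with $d = 0$ (one pair: $d' = -x$) or $d' = 0$ (one pair: $d = x$) — and these are distinct since $x \neq 0$ — gives $N = v - 2$. Meanwhile the EDF identity gives mixed count exactly $v - 1$ only if we count across \emph{all} of $\G$, but since $D_1, D_2 \subseteq \G^*$ the EDF count is genuinely among pairs in $\G^* \times \G^*$, so it is $\le N = v-2$; hence the EDF parameter $\lambda_{\mathrm{EDF}}$ here equals... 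I would recheck: the cyclotomic EDF has $\lambda_{\mathrm{EDF}} = \frac{q-1}{2} \cdot 2 / \cdots$. The clean statement is that the multiset of all external differences (both directions) covers each nonzero element $\frac{v-1}{2} \cdot 2 = v - 1$... no: with $m = 2$, $k = \frac{v-1}{2}$, equation \eqref{eqn:basic} gives $\lambda_{\mathrm{EDF}}(v-1) = k^2(m-1) = \frac{(v-1)^2}{4}$, so $\lambda_{\mathrm{EDF}} = \frac{v-1}{4}$ per direction, hence $\frac{v-1}{2}$ total across both directions. So $\mathcal{D}_{12}(x) + \mathcal{D}_{21}(x) = \frac{v-1}{2}$, and then $\mathcal{D}_{22}(x) = (v-2) - \mathcal{D}_{11}(x) - \frac{v-1}{2} = \frac{v-3}{2} - \mathcal{D}_{11}(x)$.

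Finally I would plug in the two cases. If $x \in D_1$ then $\mathcal{D}_{11}(x) = \frac{v-5}{4}$, so $\mathcal{D}_{22}(x) = \frac{v-3}{2} - \frac{v-5}{4} = \frac{v-1}{4}$; if $x \in D_2$ then $\mathcal{D}_{11}(x) = \frac{v-1}{4}$ (it lies outside $D_1$), so $\mathcal{D}_{22}(x) = \frac{v-3}{2} - \frac{v-1}{4} = \frac{v-5}{4}$. Thus $D_2$ meets each element of $D_2$ exactly $\frac{v-5}{4}$ times and each element of $D_1 = \G^* \setminus D_2$ exactly $\frac{v-1}{4}$ times as an internal difference, and since $D_2 = -D_2$ with $0 \notin D_2$, it is a regular $(v, \frac{v-1}{2}, \frac{v-5}{4}, \frac{v-1}{4})$ PDS — that is, a Paley PDS with the same parameters as $D_1$. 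The main obstacle, as the fumbling above shows, is pinning down the correct constants: one must be scrupulous about whether pair-counts range over $\G$ or $\G^*$, over ordered or unordered pairs, and about the precise value of the EDF parameter coming from \eqref{eqn:basic}. Once the identity $\mathcal{D}_{22}(x) = \frac{v-3}{2} - \mathcal{D}_{11}(x)$ is correctly established, the conclusion is immediate from the two substitutions.
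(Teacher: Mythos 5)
Your overall architecture is sound and matches the paper's: both arguments run an inclusion--exclusion over the partition $\G^*=D_1\sqcup D_2$ of the $v-2$ ordered pairs $(d,d')\in\G^*\times\G^*$ with $d-d'=x$, and your final identity $\mathcal{D}_{22}(x)=\frac{v-3}{2}-\mathcal{D}_{11}(x)$ together with the two substitutions gives exactly the right numbers. The regularity of $D_2$ is also handled correctly. But there is a genuine gap at the one substantive step: your justification that the mixed count $\mathcal{D}_{12}(x)+\mathcal{D}_{21}(x)$ equals $\frac{v-1}{2}$ for every $x\in\G^*$. You offer three sources for this, none of which is available here. Theorem 3.4 of \cite{DavHucMul} produces an EDF from a partition of $\G^*$ into PDSs, so invoking it presupposes that $D_2$ is already a PDS --- which is precisely the conclusion of the lemma. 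The ``cyclotomic computation'' only covers the special case where $D_1$ is the set of squares in $GF(q)$, whereas the lemma concerns an arbitrary Paley PDS in an arbitrary group of order $v$. And equation \eqref{eqn:basic} is a necessary condition on the \emph{parameters} of a family already known to be an EDF/SEDF; it cannot be used to establish that the external differences of $\{D_1,D_2\}$ are uniformly distributed in the first place. As it stands, the constant $\frac{v-1}{2}$ is asserted, not proved, and it is the only input to your computation that actually uses the hypothesis on $D_1$ beyond its size.

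The gap is fixable by a short direct count, which is essentially what the paper does. For $x\in\G^*$, the element $x$ occurs $|D_1|-1$ times in the multiset $D_1-\G^*$ if $x\in D_1$ and $|D_1|$ times otherwise; by the regularity $D_1=-D_1$, the same counts hold for $\G^*-D_1$. Since
\begin{equation*}
\bigl(\mathcal{D}_{12}(x)+\mathcal{D}_{11}(x)\bigr)+\bigl(\mathcal{D}_{21}(x)+\mathcal{D}_{11}(x)\bigr)
\end{equation*}
is the total count of $x$ in $(D_1-\G^*)\cup(\G^*-D_1)$, one gets $\mathcal{D}_{12}(x)+\mathcal{D}_{21}(x)=2(|D_1|-1)-2\cdot\frac{v-5}{4}=\frac{v-1}{2}$ for $x\in D_1$ and $2|D_1|-2\cdot\frac{v-1}{4}=\frac{v-1}{2}$ for $x\in D_2$, after which your argument goes through verbatim. (The paper folds this into a single formula, computing the count of $x$ in $D_2-D_2$ as $(v-2)-\bigl(2(|D_1|-1)-\frac{v-5}{4}\bigr)$, but it is the same bookkeeping; note that this is also where the regularity of $D_1$ is genuinely needed, a point your write-up never makes explicit.)
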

\begin{proof}
Observe that, for any $d\in \G$, the multiset $d-\G$ ($d \in \G$) comprises each element of $\G$ precisely once, while $d-\G^*$ comprises each element of $\G$ except $d$ itself.  Consequently, for any subset $D$ of $\G$, the multiset $D-\G$ comprises $|D|$ copies of each element of $\G$, whereas $D-\G^*$ comprises $|D|$ copies of the elements of $\G\setminus D$ and $|D|-1$ copies of the elements of $D$.   

Each element of $D_1$ occurs $|D_1|-1$ times in the multiset $\G^*-D_1$,  $|D_1|-1$ times in $D_1-\G^*$ and $\frac{v-5}{4}$ times in $D_1-D_1$, hence it occurs in $D_2-D_2$ a total of 
$$(v-2)-(2(|D_1|-1)-\frac{v-5}{4})=\frac{v-1}{4} \mbox{ times}.$$ 
Similar reasoning shows that each element of $D_2$ occurs $\frac{v-5}{4}$ times as an internal difference in the multiset $D_2-D_2$, and hence $D_2$ is a PDS with the given parameters. Clearly $D_2$ is regular: $0 \not \in D_2$ by definition, and $D_2=-D_2$ since $D_1=-D_1$ and $\G^*=-\G^*$.
\end{proof}

\begin{theorem}\label{thm:Paley}
Let $\G$ be an additive abelian group of order $v$, let $D_1$ be a Paley $(v, \frac{v-1}{2}, \frac{v-5}{4}, \frac{v-1}{4})$ PDS in group $\G$ and set $D_2=\G^*\setminus D_1$. Then $\{D_1,D_2\}$ is a $(v, 2, \frac{v-1}{2}, \frac{v-1}{4})$-SEDF.
\end{theorem}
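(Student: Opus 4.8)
The plan is to verify directly the two multiset equations that the $m=2$ case of the SEDF definition demands, namely $\D(D_1,D_2)=\tfrac{v-1}{4}(\G\setminus\{0\})$ and $\D(D_2,D_1)=\tfrac{v-1}{4}(\G\setminus\{0\})$. The engine is the same complement-counting identity already exploited in the proof of Lemma~\ref{D_2Paley}: partitioning $\G=\{0\}\uplus D_1\uplus D_2$, and using that for any subset $D\subseteq\G$ the multiset $D-\G$ contains every element of $\G$ exactly $|D|$ times, one obtains the multiset identity
\[
D_1-\G=(D_1-\{0\})\ \uplus\ (D_1-D_1)\ \uplus\ (D_1-D_2),
\]
so that $\D(D_1,D_2)=D_1-D_2$ is recoverable as the difference of the three known multisets $D_1-\G$, $D_1-\{0\}=D_1$ and $D_1-D_1$.

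First I would record, for each of the three types of group element, its multiplicity in $D_1-D_1$: the identity occurs $|D_1|=\tfrac{v-1}{2}$ times (from $d-d$), each element of $D_1$ occurs $\tfrac{v-5}{4}$ times, and each element of $D_2=\G^*\setminus D_1$ occurs $\tfrac{v-1}{4}$ times, directly from the definition of a Paley $(v,\tfrac{v-1}{2},\tfrac{v-5}{4},\tfrac{v-1}{4})$ PDS. Subtracting these multiplicities (together with the single occurrence of each element of $D_1$ coming from $D_1-\{0\}$) from the uniform multiplicity $\tfrac{v-1}{2}$ in $D_1-\G$ then leaves multiplicity $0$ for the identity (consistent with $D_1\cap D_2=\emptyset$), multiplicity $\tfrac{v-1}{2}-1-\tfrac{v-5}{4}=\tfrac{v-1}{4}$ for every element of $D_1$, and $\tfrac{v-1}{2}-\tfrac{v-1}{4}=\tfrac{v-1}{4}$ for every element of $D_2$. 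Hence $\D(D_1,D_2)=\tfrac{v-1}{4}(\G\setminus\{0\})$, which is the required equation for $i=1$.

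For the equation with $i=2$ there are two equally short routes. One is to repeat the computation verbatim with the roles of $D_1$ and $D_2$ interchanged, which is legitimate because Lemma~\ref{D_2Paley} tells us $D_2$ is itself a Paley PDS with the same parameters and $\G^*\setminus D_2=D_1$. The other is to note that regularity gives $D_1=-D_1$ and $D_2=-D_2$, so $x\mapsto -x$ maps $\D(D_1,D_2)$ bijectively onto $\D(D_2,D_1)$ and the two multisets coincide; either way $\D(D_2,D_1)=\tfrac{v-1}{4}(\G\setminus\{0\})$ as well. Since $\{D_1,D_2\}$ partitions $\G^*$ (so the two $k$-sets are disjoint $k$-subsets with $km=v-1\le v$), this establishes that $\{D_1,D_2\}$ is a $(v,2,\tfrac{v-1}{2},\tfrac{v-1}{4})$-SEDF; as a sanity check one can confirm $\tfrac{v-1}{4}(v-1)=\bigl(\tfrac{v-1}{2}\bigr)^2$ in agreement with \eqref{eqn:basic}.

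I do not anticipate a genuine obstacle: the argument is pure multiset bookkeeping. The one point requiring care is that a Paley PDS registers elements of $D_1$ and of $\G^*\setminus D_1$ with \emph{different} multiplicities ($\lambda=\tfrac{v-5}{4}$ versus $\mu=\tfrac{v-1}{4}$), so those two sub-cases in the subtraction must be handled separately; it is precisely the specific Paley parameter values that force both resulting multiplicities to collapse to the common value $\tfrac{v-1}{4}$, so the proof genuinely uses that $D_1$ is of Paley type and not merely an arbitrary PDS.
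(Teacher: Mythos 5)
Your proposal is correct and follows essentially the same route as the paper: both count the multiplicity of an element of $D_1$ (resp.\ $D_2$) in $\D(D_1,D_2)$ by subtracting its multiplicity in $D_1-D_1$ (namely $\frac{v-5}{4}$, resp.\ $\frac{v-1}{4}$) from its multiplicity in $D_1-\G^*$, obtaining $\frac{v-1}{4}$ in each case, and then handle $\D(D_2,D_1)$ by negation/symmetry. The only cosmetic difference is that the paper also records the intermediate EDF count, which your argument makes redundant.
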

\begin{proof}
By Lemma \ref{D_2Paley}, since $D_1$ is a Paley difference set with parameters $(v, \frac{v-1}{2}, \frac{v-5}{4}, \frac{v-1}{4})$, so is $D_2$.  The fact that $\{D_1,D_2\}$ forms an EDF is a consequence of Theorem 3.4 of \cite{DavHucMul}, and can readily be seen directly, as each element of $\G^*$ occurs $\frac{v-5}{4}+\frac{v-1}{4}=\frac{v-3}{2}$ times in the set of internal differences, and hence $(v-2)-\frac{v-3}{2}=\frac{v-1}{2}$ times in the set of external differences.  To see that this EDF is strong, let $c \in D_1$; then the number of times $c$ occurs in the multiset $D_1-D_2$ is given by $|D_1|-1-\frac{v-5}{4}=\frac{v-1}{4}$.  Now let $d \in D_2$; the number of times $d$ occurs in $D_1-D_2$ is $|D_1|-0-\frac{v-1}{4}=\frac{v-1}{4}$.  Thus $D_1-D_2$ comprises every element of $\G^*$ precisely $\lambda=\frac{v-1}{4}$ times, and reversal yields the same property for $D_2-D_1$.
\end{proof}

\begin{example} 
The following Paley PDS construction is due to \cite{Ma}: let $q$ be an odd prime power, and let $H_1, H_2, \ldots H_{\frac{q+1}{2}}$ be distinct lines in the two-dimensional vector space $\mathbb{F}_q \times \mathbb{F}_q$.  Then $D=\bigcup_{i=1}^{(q+1)/2} (H_i \setminus \{(0,0)\})$ is a Paley PDS in the group $(\mathbb{F}_q^2,+)$, which yields a $(q^2,2, \frac{q^2-1}{2}, \frac{q^2-1}{4})$-SEDF via Theorem \ref{thm:Paley}.

In the above, take $q=3$ and  let $\G=\mathbb{Z}_3 \times \mathbb{Z}_3$.  Then $\{D_1,D_2\}$ where 
$$D_1=\{(0,1), (0,2), (1,0), (2,0)\}$$ 
and 
$$D_2=\{(1,1), (1,2), (2,1), (2,2) \}$$
 is a $(9,2,4,2)$-SEDF - this is the SEDF from Example \ref{lambda=m=2}.
\end{example}

It transpires that Paley PDSs offer, not simply a class of examples, but a characterization of SEDFs of partition type when $m=2$.

\begin{theorem}
Let $\G$ be an additive abelian group of order $v$ and let $D_1, D_2$ be two sets of size $\frac{v-1}{2}$ which partition the non-identity elements of $\G$. Then $\{D_1,D_2\}$ is an SEDF in $\G$ if and only if $D_1$ (and hence $D_2$) is a Paley PDS in $\G$.
\end{theorem}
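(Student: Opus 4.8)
The reverse implication --- that a Paley PDS $D_1$, with $D_2=\G^*\setminus D_1$, yields an SEDF of this shape --- is exactly Theorem~\ref{thm:Paley}, and the parenthetical ``and hence $D_2$'' is Lemma~\ref{D_2Paley}; so the plan is to prove the forward implication. Assume $\{D_1,D_2\}$ is an SEDF, where $D_1,D_2$ partition $\G^*$ and each has size $\frac{v-1}{2}$. First I would record the parameters: substituting $m=2$ and $k=\frac{v-1}{2}$ into \eqref{eqn:basic} forces $\lambda=\frac{v-1}{4}$, so in particular $v\equiv 1\pmod 4$, and the \emph{strong} condition for $i=1$ says that the multiset $\D(D_1,D_2)$ equals $\frac{v-1}{4}(\G\setminus\{0\})$ exactly. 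It is worth noting that the strong hypothesis --- not merely the EDF property --- is what gets used here: for $m=2$, the EDF relation only controls the multiplicity of $g$ in $\D(D_1,D_2)$ together with that of $-g$, which would not suffice.

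The heart of the argument is one multiset identity obtained by decomposing the group. For any subset $S\subseteq\G$, the multiset $\{s-g : s\in S,\ g\in\G\}$ consists of exactly $|S|$ copies of every element of $\G$. Taking $S=D_1$ and splitting the index $g$ over the partition $\G=\{0\}\cup D_1\cup D_2$ gives, as an identity of multisets,
\[ |D_1|\cdot\G \;=\; D_1 \;+\; M \;+\; \D(D_1,D_2), \]
where $M=\{d-d' : d,d'\in D_1\}$ is the full internal-difference multiset of $D_1$ (containing $|D_1|$ copies of $0$, and agreeing with $\mathcal{D}(D_1)$ on the nonzero elements). Substituting $\D(D_1,D_2)=\frac{v-1}{4}(\G\setminus\{0\})$ and comparing the coefficient of an arbitrary nonzero $g$ on the two sides shows that $g$ occurs in $\mathcal{D}(D_1)$ exactly $\frac{v-1}{4}-1=\frac{v-5}{4}$ times when $g\in D_1$ and exactly $\frac{v-1}{4}$ times when $g\in D_2=\G\setminus(\{0\}\cup D_1)$. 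This is precisely the statement that $D_1$ is a $(v,\frac{v-1}{2},\frac{v-5}{4},\frac{v-1}{4})$ partial difference set.

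It remains to verify that this PDS is \emph{regular}. That $0\notin D_1$ is immediate from $D_1\subseteq\G^*$. For $D_1=-D_1$, I would use the elementary fact that the internal-difference multiset of any set is invariant under negation, so $g$ and $-g$ occur equally often in $\mathcal{D}(D_1)$; since the two multiplicities $\frac{v-5}{4}$ and $\frac{v-1}{4}$ found above are distinct, $g\in D_1$ forces $-g\in D_1$. Thus $D_1$ is a regular PDS with Paley-type parameters, i.e.\ a Paley PDS, and then $D_2$ is one too by Lemma~\ref{D_2Paley}, which completes the equivalence.

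I do not expect a genuine obstacle: the only place calling for care is setting up the decomposition of $\G$ correctly as a multiset identity --- tracking the copies of $0$, and using the strong rather than the EDF hypothesis --- after which the PDS parameters drop out by comparing coefficients and regularity is free from the strict inequality $\frac{v-5}{4}<\frac{v-1}{4}$. (An alternative route evaluates a nontrivial character $\chi$ on the relations $\chi(D_1)+\chi(D_2)=-1$ and $\chi(D_1)\overline{\chi(D_2)}=-\frac{v-1}{4}$ to obtain $\chi(D_1)+|\chi(D_1)|^2=\frac{v-1}{4}$, forcing $\chi(D_1)$ to be real and equal to $\frac{-1\pm\sqrt v}{2}$, the defining character values of a Paley-type PDS; but the multiset computation is cleaner.)
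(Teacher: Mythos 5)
Your proposal is correct and follows essentially the same route as the paper: derive $\lambda=\frac{v-1}{4}$ from equation~(\ref{eqn:basic}), count occurrences of a fixed nonzero $g$ in $D_1-\G$ (the paper uses $D_1-\G^*$) split over the partition to extract the PDS parameters from the strong condition $\D(D_1,D_2)=\frac{v-1}{4}(\G\setminus\{0\})$, and deduce regularity from the negation-invariance of $\mathcal{D}(D_1)$ together with the distinctness of the two multiplicities. Your multiset identity is just a tidier packaging of the paper's element-by-element count, and the appeals to Theorem~\ref{thm:Paley} and Lemma~\ref{D_2Paley} for the converse and for $D_2$ match the paper exactly.
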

\begin{proof}
($\Rightarrow$)  Suppose $\{D_1,D_2\}$ is an SEDF.  The condition $\lambda(v-1)=k^2(m-1)$ of equation (\ref{eqn:basic}) with $m=2$ and $k=\frac{v-1}{2}$ yields $\lambda=\frac{v-1}{4}$.  Since $\lambda \in \mathbb{N}$, we must have $v \equiv 1 \mod 4$. Hence a Paley PDS with appropriate parameters is defined for all values of $v$ for which such an SEDF can exist.

We show that if $\{D_1,D_2\}$ is a $(v, 2, \frac{v-1}{2}, \frac{v-1}{4})$-SEDF in $\G$, then $D_1$ is a Paley $(v,\frac{v-1}{2}, \frac{v-5}{4}, \frac{v-1}{4})$ PDS.  Consider the number of times an element $c$ of $D_1$ occurs in the multiset of internal differences of $D_1$.  In $D_1-\G^*$, $c$ occurs $|D_1|-1$ times, while in $D_1-D_2$ it occurs $\frac{v-1}{4}$ times.  Hence in $D_1-D_1$, it occurs $\frac{v-1}{2}-1-\frac{v-1}{4}=\frac{v-5}{4}$ times.  An element $d \in D_2$ occurs $|D_1|$ times in $D_1-\G^*$ and $\frac{v-1}{4}$ times in $D_1-D_2$, i.e. $\frac{v-1}{4}$ times in $D_1-D_1$, as required. 

We must check that $D_1$ is regular.  By definition, $0 \not\in D_1$.  To see that $D_1=-D_1$, we show that if $x$ lies in $D_1$ then so does $-x$.   Note that the elements of $D_1$ are precisely those elements of $\G^*$ which occur $\frac{v-5}{4}$ times as a difference in $\mathcal{D}(D_1)$.  Let $x \in D_1$, and observe that every pair $(a_1,b_1) \in D_1 \times D_1$ such that $x=a_1-b_1$, is in correspondence with the pair $(b_1,a_1) \in D_1 \times D_1$ such that $-x=b_1-a_1$.  Since there are precisely $\frac{v-5}{4}$ pairs, $-x \in D_1$.

Lemma \ref{D_2Paley} now implies that $D_2$ is also a Paley PDS with the same parameters.

($\Leftarrow$) This direction is established in Theorem \ref{thm:Paley}.
\end{proof}




This characterization tells us that an $(n,2,\frac{n-1}{2},\lambda)$-SEDF can be constructed whenever an abelian Paley PDS of order $n$ can be constructed. For example, constructions are given in \cite{LeuMa} for groups of the form $(\mathbb{Z}_{p^{r_1}})^2 \times (\mathbb{Z}_{p^{r_2}})^2 \times \cdots \times  (\mathbb{Z}_{p^{r_s}})^2$ for $r_1,r_2, \ldots, r_s \in \mathbb{Z}^+$, and in \cite{Pol} for groups of the form $\mathbb{Z}_3^2 \times \mathbb{Z}_p^{4s}$ for $p$ any odd prime. 

We may ask whether there exists an $(n,2,k,\lambda)$-SEDF with $k<\frac{n-1}{2}$.  This is answered in the affirmative in \cite{WeiZha}, where a cyclotomic construction yields SEDFs with parameters $(q,2,\frac{q-1}{4},\frac{q-1}{16})$ and $(q,2,\frac{q-1}{6}, \frac{q-1}{36})$ for prime powers $q$ of certain specific forms.  It is an open question which other parameter sets are possible.

\section{Generalized SEDFs}

In the definition of a strong external difference family, we may relax the condition on uniform set size to obtain the following, introduced in \cite{PatSti}:

\begin{definition}[Generalized Strong External Difference Family] 
Let $\G$ be an additive abelian group of order $n$.  An $(n,m;k_1,\ldots,k_m;\lambda_1,\ldots,\lambda_m)${\bf -generalized strong external difference family} (or $(n,m;k_1,\ldots,k_m;\lambda_1,\ldots,\lambda_m)${\bf -GSEDF}) is a set of $m$ disjoint subsets of $\G$, say $A_1,\ldots,A_m$, such that $|A_i|=k_i$ for $1 \leq i \leq m$ and the following multiset equation holds for every $i$, $1 \leq i \leq m$:
\begin{equation*}
\bigcup_{\{j:j\neq i\}}\D(A_i,A_j)=\lambda_i(\G\setminus\{0\}).
\end{equation*}
\end{definition}

An $(n,m,k,\lambda)$-SEDF is, by definition, an $(n,m;k,\ldots,k; \lambda, \ldots, \lambda)$-GSEDF.

Two examples of GSEDFs were given in \cite{PatSti}:
\begin{itemize}
\item Let $\G=(\mathbb{Z}_n,+)$, $A_1=\{0\}$ and $A_2=\{1,2,\ldots,n-1\}$. This is an $(n,2;1,n-1;1,1)$-GSEDF.

\item Let $\G=(\mathbb{Z}_7,+)$, $A_1=\{1\}$, $A_2=\{2\}$, $A_3=\{4\}$ and $A_4=\{0,3,5,6\}$.  This is a $(7,4;1,1,1,4;1,1,1,2)$-GSEDF.
\end{itemize}
Further new GSEDFs are constructed in \cite{WeiZha}.

The proof strategy of Theorem \ref{thm:dnelambda} may be extended to obtain a necessary condition for the existence of a GSEDF:

\begin{theorem}\label{thm:dneGSEDF}
Suppose $\{A_1, \ldots, A_m \}$ is an $(n,m;k_1,\ldots,k_m;\lambda_1,\ldots,\lambda_m)$-GSEDF, where $m\geq 3$.  Let $\Lambda=\lambda_1 + \cdots + \lambda_m$ and $K=k_1+ \cdots + k_m$.  Then for any $i \in \{1, \ldots, m\}$ for which $k_i>\lambda_i > 1$ and $\lambda_i \leq \frac{\Lambda}{2}$, the following inequality holds:
\begin{align}
\frac{(k_i-1)(\Lambda-2 \lambda_i)}{(K- k_i)(\lambda_i-1)}\leq 1. \label{eq:GSEDF}
\end{align}

\end{theorem}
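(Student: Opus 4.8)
The plan is to run the argument of Theorem~\ref{thm:dnelambda} with $\lambda_i$ playing the role of $\lambda$, taking care of the bookkeeping now that the part sizes and the parameters $\lambda_j$ are allowed to vary. By relabelling the parts we may assume $i=1$; write $\lambda := \lambda_1$ and $k := k_1$. Fix a point $v \in A_1$, set $I = \{\, v-a \mid a \in A_1,\ a \neq v \,\} \subseteq \G \setminus \{0\}$, so $|I| = k-1$, and for $x \in A_t$ with $t \neq 1$ set $E_x = \{\, x-a \mid a \in A_j,\ j \neq 1,t \,\}$, the external differences from $x$ to parts other than $A_1$ and other than its own part. The first step is the multiset identity
\[
\bigcup_{t \neq 1}\ \bigcup_{x \in A_t} E_x \;=\; (\Lambda - 2\lambda)\,(\G \setminus \{0\}).
\]
This comes from decomposing the full multiset of external differences between distinct parts: summing the GSEDF condition over all $m$ parts shows it equals $\Lambda(\G\setminus\{0\})$; the differences out of $A_1$ contribute $\lambda(\G\setminus\{0\})$; the differences into $A_1$ form the negation of the multiset of those out of $A_1$, hence (negation permuting $\G\setminus\{0\}$) also contribute $\lambda(\G\setminus\{0\})$; and what remains — the differences between two parts each different from $A_1$ — is exactly the displayed union. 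The hypothesis $\lambda \le \Lambda/2$ is what makes the right-hand side a legitimate (nonnegative) multiset. In particular each nonzero group element lies in exactly $\Lambda - 2\lambda$ of the sets $E_x$.

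The second step is the local bound: for every $v' \in A_t$ with $t \neq 1$ one has $|E_{v'} \cap I| \le \lambda - 1$. Suppose instead that $|E_{v'} \cap I| \ge \lambda$, and choose distinct $\delta_1,\dots,\delta_\lambda \in E_{v'} \cap I$. Put $u_r = v - \delta_r$ and $u'_r = v' - \delta_r$; then $u_r \in A_1$ since $\delta_r \in I$, and $u'_r$ lies in some part $\neq A_1$ since $\delta_r \in E_{v'}$. Writing $\gamma = v - v' \in \G\setminus\{0\}$, each pair $(u_r,u'_r)$, together with the additional pair $(v,v')$, realises $\gamma$ as an external difference out of $A_1$, and these $\lambda+1$ pairs are pairwise distinct because their first coordinates $v,u_1,\dots,u_\lambda$ are distinct (the $\delta_r$ being distinct and nonzero). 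This contradicts the GSEDF condition for $A_1$, by which $\gamma$ occurs exactly $\lambda$ times among the external differences out of $A_1$; this is the configuration of Figure~\ref{fig:drawingforlambda2}, with $\lambda$ internal differences in place of two.

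Finally, double-count $N = \#\{\,(\theta,E_x) \mid \theta \in I \cap E_x,\ x \in \bigcup_{t \neq 1}A_t\,\}$. Counting over $\theta \in I$ and invoking the first step gives $N = (k-1)(\Lambda-2\lambda)$; there are $\sum_{t \neq 1}k_t = K-k$ sets $E_x$, so by the pigeonhole principle some $E_x$ meets $I$ in at least $N/(K-k)$ points. If $N/(K-k) > \lambda-1$ this would force $|E_{v'}\cap I| \ge \lambda$ for some $v'$, contradicting the second step; hence $(k-1)(\Lambda-2\lambda) \le (\lambda-1)(K-k)$, which on dividing by $\lambda-1$ (legitimate since $\lambda>1$) is exactly inequality~\eqref{eq:GSEDF}.

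I do not expect a genuine obstacle here: the entire content is correctly propagating the changed counts. The points needing care are the multiset identity of the first step and the precise role of the three hypotheses — $\lambda_i \le \Lambda/2$ makes $\Lambda - 2\lambda_i$ a valid multiplicity, $\lambda_i > 1$ is needed in order to divide by $\lambda_i - 1$ at the end, and $k_i > \lambda_i$ (equivalently $|I| = k_i-1 \ge \lambda_i$) is what makes the forbidden configuration, which consumes $\lambda_i$ distinct internal differences from $v$, possible at all, so that deriving a contradiction in the last step is meaningful.
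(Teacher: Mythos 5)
Your proposal is correct and follows essentially the same route as the paper's own proof: the same multiset decomposition giving $(\Lambda-2\lambda_1)(\G\setminus\{0\})$ for the differences avoiding $A_1$, the same forbidden-configuration argument bounding $|E_{v'}\cap I|$ by $\lambda_1-1$, and the same double count with the pigeonhole principle over the $K-k_1$ sets $E_x$. Your explicit remarks on where each of the three hypotheses enters are accurate and match the paper's usage.
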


\begin{proof}
Suppose, without loss of generality, that $i=1$ satisfies the conditions of the theorem statement, i.e. $k_1> \lambda_1 >1$ and $\lambda_1 \leq \frac{\Lambda}{2}$.  We will show that if \eqref{eq:GSEDF} does not hold, then it is possible to find a point $v$ in $A_1$ and $\lambda_1$ internal differences from $v$ which correspond to a point $v^\prime$ in some $A_i$ for $i\neq 1$ and $\lambda_1$ external differences from $v^\prime$, and thereby to construct $\lambda_1+1$ external differences from $A_1$ that are all equal. 

Fix a point $v$ in $A_1$.  Let $I$ be the set of internal differences from $v$
\begin{equation*}
I=\{v-a \, | \, a\in A_1,\ a\neq v\}\subseteq \G\setminus\{0\}.
\end{equation*}
Then $|I|=k_1-1\geq \lambda_1 >0$.

For $x\in A_i$ with $i\neq 1$ let $E_x$ be the set of external differences from $x$ to elements of $A_j$ for any $j\neq 1$:
\begin{equation*}
E_x=\{x-a \, | \, a\in A_j,\ j\neq 1,i\}.
\end{equation*}
Then $|E_x|=K-k_1-k_i$, for any $x \in A_i$.

From the definition of an $(n,m;k_1,\ldots,k_m;\lambda_1,\ldots,\lambda_m)$-GSEDF, each nonzero group element appears $\lambda_j$ times in the multiset of external differences from $A_j$ for each $j=1,2,\dots, m$, and hence $\sum_{j=1}^m \lambda_j= \Lambda$ times in the multiset of all external differences:
\begin{equation}
\{a-b \, | \, a\in A_i,\ b\in A_j,\ i\neq j\}=\Lambda ({\G}\setminus\{0\}). \label{eq:GSexternaldifferences}
\end{equation}  

Furthermore, since the multiset of external differences from $A_1$ comprises $\lambda_1$ copies of each nonzero group element, it is also the case that each nonzero group element occurs precisely $\lambda_1$ times as an external difference from $\cup_{i \neq 1} A_i$ into $A_1$ (since the multiset of such external differences can be obtained by negating the multiset of external differences out of $A_1$). Hence each non-zero group element ocurs $2 \lambda_1$ times as an external difference involving $A_1$.  From this we can deduce that each nonzero group element occurs $\Lambda - 2 \lambda_1$ times as an external difference between sets $A_i$ and $A_j$ with $i\neq j$ and $i,j\neq 1$, so
\begin{equation}
\bigcup_{x \in A_i, i\neq 1} E_x =(\Lambda - 2 \lambda_1)({\G}\setminus \{0\}). \label{eq:2exunion}
\end{equation}
Observe that $\Lambda-2 \lambda_1 \geq 0$ is guaranteed by the initial conditions.

Suppose we could find an element $v^\prime \in A_i$ for some $i\neq 1$ for which $|E_{v^\prime} \cap I|\geq \lambda_1$.








 Let $\delta_1, \ldots \delta_{\lambda_1}$ be distinct elements of $E_{v^\prime}\cap I$.  Let $u_t=v-\delta_t$ for $1 \leq t \leq \lambda_1$.  Then $u_1, \ldots, u_{\lambda_1}$ are distinct elements of $A_1$, as $\delta_1, \ldots, \delta_{\lambda_1}$ are distinct elements of $I$.  Let ${u_t}^\prime=v^\prime-\delta_t$ for $1 \leq t \leq \lambda_1$.  Then ${u_1}^\prime, \ldots, {u_{\lambda_1}}^\prime$ are distinct elements of $\cup_{j\neq 1,i} A_j$ as  $\delta_1, \ldots \delta_{\lambda_1}$  are distinct elements of $E_{v^\prime}$.  (We note that the ${u_t}^\prime$ may lie in different $A_j$ and $A_\ell$, or they may all occur in a single $A_j$ but that does not affect the rest of this argument.)  Let $v-v^\prime=\gamma\in{\G}\setminus\{0\}$. We observe that, for each $1 \leq t \leq \lambda_1$,  
\begin{align*}
{u_t}-{u_t}^\prime &=(v-\delta_t)-(v^\prime-\delta_t),\\
&=v-v^\prime,\\
&=\gamma.
\end{align*}
So $\gamma \in \G \setminus \{0\}$ occurs as an external difference from $A_1$ a total of $\lambda_1+1$ times  - a contradiction.  Thus for all $v^\prime \in A_i$ ($i\neq 1$), we must have $|E_{v^\prime} \cap I|\leq \lambda_1 -1$.

We now count the number $N$ of pairs $(\theta,E_x)$ where $\theta\in I\cap E_x$ and $x\in A_i$ for some $i\neq 1$. There are $|I|=k_1-1$ choices for $\theta$.  As each nonzero element of $\G$ occurs $\Lambda - 2 \lambda_1$ times in $\bigcup_{x \in A_i, i\neq 1} E_x $, we see that for each of these $\theta$ there are $\Lambda - 2 \lambda_1$ values of $x$ for which $\theta\in E_x$, so that $N=(k_1-1)( \Lambda - 2 \lambda_1)$.

The number of distinct sets $E_x$ with $x\in A_i$ for some $i\neq 1$ is $\sum_{j=2}^m k_j=K-k_1$.  By the Pigeonhole Principle there exists $x$ for which the set $E_x$ contains at least
\begin{equation*}
\frac{N}{K-k_1}=\frac{(k_1-1)(\Lambda - 2 \lambda_1)}{K-k_1}
\end{equation*}
elements of $I$. Whenever this quantity is at least $\lambda_1$, i.e. strictly greater than $(\lambda_1-1)$, we have $|E_{v^\prime} \cap I|\geq \lambda_1$ for some $v^\prime$.  Thus no such GSEDF will exist if
\begin{equation*}
\frac{(k_1-1)(\Lambda- 2 \lambda_1)}{K- k_1} > \lambda_1 -1.
\end{equation*}
It is clear that the same argument will hold with $1$ replaced by any appropriate $i \in \{1, \ldots, m\}$ for which the conditions in the theorem statement are satisfied.
\end{proof}

Observe that taking $k_i=k$ and $\lambda_i=\lambda$ for all $1 \leq i \leq m$ yields Theorem \ref{thm:dnelambda}, while further setting $k_i=k$ and $\lambda_i=2$ for all $1 \leq i \leq m$ yields Theorem \ref{thm:dne}.

\section{Concluding remarks}
This paper establishes various conditions under which SEDFs and GSEDFs can exist, and establishes a structural characterization of partition type SEDFs in the $m=2$ case.  Future directions are two-fold; the fine-tuning of such necessary conditions, with the aim of completely characterising the possible parameters sets, and the development of further construction methods and structural characterizations. 
One specific open problem is whether SEDFs with $\lambda=m=2$ exist when $k$ is less than the maximum possible size of $\frac{n-1}{2}$.  It would be desirable to gain further understanding of the general case when $m=2$. By \cite{MarSti}, the next-smallest $m$ for which SEDFs may exist is $m=5$; further investigation of this case would be another natural focus.

\subsection*{Acknowledgements} Thanks to Siaw-Lynn Ng for helpful discussions.

\end{document}